\newtheorem{theorem}{Theorem}[section]
\newtheorem{lemma}[theorem]{Lemma}
\newtheorem{claim}{Claim}
\newtheorem{conjecture}{Conjecture}
\newcommand{\lp}{\left (}
\newcommand{\rp}{\right )}
\newcommand{\hh}{\hat{H}}
\newcommand{\diam}{\text{diam}}
\newcommand{\ind}{\text{ind}}
\newcommand{\ordiam}{\overrightarrow{\text{diam}}}
\newcommand{\ori}{\overrightarrow}
\DeclarePairedDelimiter{\abs}{\lvert}{\rvert}%
\renewenvironment{description}%
               {\list{}{\leftmargin=0pt 
                        \labelwidth\z@ \itemindent-\leftmargin
                        }}%
               {\endlist}
\title{On the oriented diameter of graphs with given minimum degree}
\author{
Garner Cochran \thanks{Berry College, Mt. Berry, GA, 30149 
({\tt gcochran@berry.edu}).}
\and
Zhiyu Wang \thanks{Louisiana State University, Baton Rouge, LA, 70803
({\tt zhiyuw@lsu.edu}). This author was supported in part by LA Board of Regents grant LEQSF(2024-27)-RD-A-16.}
}
\begin{document}

\maketitle

\begin{abstract}
Erd\H{o}s, Pach, Pollack, and Tuza [\textit{J. Combin. Theory Ser. B, 47(1) (1989), 73--79}] proved that the diameter of a connected $n$-vertex graph with minimum degree $\delta$ is at most $\frac{3n}{\delta+1}+O(1)$.
The oriented diameter of an undirected graph $G$, denoted by $\overrightarrow{\text{diam}}(G)$, is the minimum diameter of a strongly connected orientation of $G$. Bau and Dankelmann [\textit{European J. Combin., 49 (2015), 126–133}] showed that for every bridgeless $n$-vertex graph $G$ with minimum degree $\delta$, $\overrightarrow{\text{diam}}(G) \leq \frac{11n}{\delta+1}+9$. They also showed an infinite family of graphs with oriented diameter at least $\frac{3n}{\delta+1} + O(1)$ and posed the problem of determining the smallest possible value $c$ for which $\overrightarrow{\text{diam}}(G) \leq c \cdot\frac{3n}{\delta+1}+O(1)$ holds. 
In this paper, we show that the smallest value $c$ such that the upper bound above holds for all $\delta\geq 2$ is $1$, which is best possible.
\end{abstract}

\section{Introduction}\label{sec:intro}
A directed graph $D = (V(D),E(D))$ is a graph with a vertex set $V(D)$ and an edge set $E(D)$ consisting of ordered pairs of vertices, called \textit{arcs} or directed edges. We use $uv$ to denote the arc $(u,v)$, i.e., the arc oriented from $u$ to $v$. Given an undirected graph $G=(V(G),E(G))$, an \textit{orientation} of $G$ is a directed graph such that each edge in $E(G)$ is assigned a direction. Given a (directed) graph $D$ and two vertices $u,v \in V(D)$, the \textit{distance} from $u$ to $v$ in $D$, denoted by $d_D(u,v)$, is the number of edges of a shortest (directed) path from $u$ to $v$ in $D$. More generally, given any two disjoint vertex subsets $S,T\subseteq V(D)$, define $d_D(S,T):= \min\{d_{D}(s,t): s\in S, t\in T\}$. If $S = \{s\}$, we simply write $d_D(S,T)$ as $d_D(s,T)$. 
For convenience, given a vertex $v$ and a subgraph $H$ of $G$, we also use $d_D(v,H)$ to denote $d_D(v,V(H))$.
We often ignore the subscript if there is no ambiguity on the underlying graph. Given a (directed) graph $D$, the \textit{diameter} of $D$ is defined to be $\diam(D)=\max\{d_D(u,v): u,v\in V(D)\}$. Given an undirected graph $G$ and $v\in V(G)$, let $N_G(v)$ denote the neighborhood of $v$ in $G$, and let $N_G[v]:= N_G(v)\cup \{v\}$. An edge $e \in E(G)$ is called a \textit{bridge} if $G-e$ is disconnected. A graph is called \textit{bridgeless} if it contains no bridge. 

A directed graph $D$ is called \textit{strongly connected} if for any two vertices $u,v \in V(D)$, there exists a directed path from $u$ to $v$. Robbins~\cite{Robbins1939}, showed in 1939 that every bridgeless graph has a strongly connected orientation. The \textit{oriented diameter} of a bridgeless graph $G$, denoted by $\ordiam(G)$, is defined as
$$\ordiam(G)=\min\{\diam(D):\textrm{$D$ is a strongly connected orientation of $G$}\}.$$ Note that for any bridgeless graph $G$, $\ordiam(G)\ge \diam(G)$.

Chv\'atal and Thomassen~\cite{Chvatal-Thomassen1978}, showed in 1978 that determining the oriented diameter of a given graph is NP-complete. In the same paper, Chv\'atal and Thomassen showed that every bridgeless graph $G$ with diameter $d$ satisfies $\ordiam(G)\leq 2d^2+2d$, and there exist bridgeless graphs of diameter $d$ for which every strong orientation has diameter at least $\frac{1}{2}d^2+d$. The upper bound was recently improved by Babu, Benson, Rajendraprasad and Vaka \cite{BBRV2021} to  $1.373d^2+6.971d-1$.

The paper by Chv\'atal and Thomassen \cite{Chvatal-Thomassen1978} has led to further investigations of such bounds on the oriented diameter with respect to other graph parameters, including the diameter \cite{FMR2004,Huang-Ye2007,KLW2010}, the radius \cite{CGT1985}, the domination number \cite{FMPR2004, Laetsch-Kurz2012}, the maximum degree \cite{DGS2018, Li-Chen2025}, the minimum degree \cite{Bau-Dankelmann2015,CDS2019,Surmacs2017}, the number of edges of the graph \cite{CCDS2021}, and other graph classes \cite{Chen-Chang2021,Gutin1994,GKTY2002,Gutin-Yeo2002,Huang-Ye2007, Koh-Ng2005,KRS2022, Lakshmi2011,Lakshmi-Paulraja2007, Lakshmi-Paulraja2009, Plesnik1985, Soltes1986, WCDGSV2021, GLW2023+}. For more results, see the survey by Koh and Tay \cite{Koh-Tay2022}.

In this paper, we are concerned about bounding the oriented diameter of a graph $G$ with minimum degree $\delta$. 
Erd\H{o}s, Pach, Pollack and Tuza \cite{EPPT1989} proved that the diameter of a connected graph of order $n$ and minimum degree $\delta$ is at most $\frac{3n}{\delta+1}+O(1)$. In 2015, Bau and Dankelmann \cite{Bau-Dankelmann2015} showed the following similar bound for the oriented diameter of a graph $G$ with minimum degree $\delta\geq 2$.

\begin{theorem}\label{thm:Bau-Dankelmann}\cite{Bau-Dankelmann2015}
Given a bridgeless graph $G$ of order $n$ and minimum degree $\delta \geq 2$, $\ordiam(G)\leq \frac{11n}{\delta+1}+9$. Moreover, given $\delta\geq 4$ and sufficiently large $n$, there exists a bridgeless $n$-vertex graph $G$ with minimum degree $\delta$ such that $\ordiam(G)\geq \frac{3n}{\delta+1}+O(1)$.
\end{theorem}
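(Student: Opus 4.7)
The statement has an upper-bound half and a lower-bound half, which I would attack separately.

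For the upper bound, the plan is to build a short ``backbone'' along which all directed distances can be routed, imitating the Erd\H{o}s--Pach--Pollack--Tuza argument for the undirected diameter but orienting ear by ear. First I would produce a path $P$ in $G$ of length at most $\frac{3n}{\delta+1}+O(1)$ whose distance-$3$ neighborhood covers $V(G)$: take a shortest path between two vertices at maximum distance, and observe that the closed neighborhoods of every third vertex along such a path are pairwise disjoint and each contain at least $\delta+1$ vertices, which simultaneously yields the length bound on $P$ and the covering property. Second, using the bridgeless assumption, I would close $P$ into a ``skeleton'' subgraph $C$ by adjoining short ears, orient $C$ as a directed closed walk visiting every vertex of $P$, and then orient the remaining ears so that each off-skeleton vertex has a short directed out-path to $C$ and a short directed in-path from $C$. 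The diameter bound then arises from summing (distance out to $C$) + (one lap around $C$) + (distance back from $C$) $\le O(1)+\frac{3n}{\delta+1}+O(1)$, with the multiplicative constant $11/3$ emerging from the various detours introduced by the ear orientations.

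The main obstacle is the second step: one must choose the ear orientations so that \emph{both} a short in-path and a short out-path exist at every off-skeleton vertex simultaneously, which fails under a naive orientation and requires a careful case analysis of how the ears attach to $C$. This is exactly where the loss of a factor of roughly $11/3$ over the undirected bound $\frac{3n}{\delta+1}$ creeps in, and it is also where the present paper's improvement of the constant all the way down to $c=1$ must replace the crude routing by a tighter scheme.

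For the lower bound, I would use the classical chain-of-cliques construction. Take $m=\lfloor n/(\delta+1)\rfloor$ disjoint copies $K^{(1)},\ldots,K^{(m)}$ of $K_{\delta+1}$ and, for each $i$, link $K^{(i)}$ to $K^{(i+1)}$ by two independent edges, with any leftover vertices absorbed into a single clique. The duplicated links ensure bridgelessness; since $\delta\ge 4$, the slight increase in degree at the link endpoints never drops the minimum below $\delta$. Any path from a vertex of $K^{(1)}$ to a vertex of $K^{(m)}$ must enter, cross, and leave each intermediate clique, spending at least three edges per step, so $\diam(G)\ge 3(m-1)=\frac{3n}{\delta+1}-O(1)$, and hence $\ordiam(G)\ge\diam(G)\ge\frac{3n}{\delta+1}-O(1)$.
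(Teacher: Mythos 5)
Your lower-bound construction does not achieve the claimed bound, and this is a genuine gap. In a chain of cliques $K^{(1)},\dots,K^{(m)}$ with $K^{(i)}\cong K_{\delta+1}$ and consecutive cliques joined by two independent edges, crossing an intermediate clique costs at most \emph{two} edges, not three: one linking edge to enter $K^{(i)}$, and at most one internal edge to move from the entry vertex to the exit vertex, since any two vertices of a clique are adjacent. Hence $\diam(G)\le 2m+O(1)\approx \frac{2n}{\delta+1}$, and a strong orientation that sends one linking edge forward and one backward per junction, with each clique oriented to have small diameter, shows the oriented diameter is also only about $\frac{2n}{\delta+1}$. The claim ``spending at least three edges per step'' is where the argument fails. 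To get the factor $3$ you must spread each group of $\delta+1$ vertices over \emph{three} consecutive layers rather than concentrate it in one clique; this is exactly what the paper's construction $G_{\delta,k}=K_{\delta-1}+K_2+K_2+K_{\delta-3}+\cdots+K_{\delta-1}$ (a sequential join, with the period $K_2+K_2+K_{\delta-3}$ of $\delta+1$ vertices repeated $k$ times) does: each period forces $3$ steps, the $K_2$ and $K_{\delta-3}$ blocks keep the minimum degree exactly $\delta$ (this is where $\delta\ge 4$ is used), and the diameter is $3k+3=\frac{3n}{\delta+1}-3$, which lower-bounds the oriented diameter.

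On the upper bound, note that the paper simply cites Bau and Dankelmann and proves nothing, so your sketch is extra work; but as written it also has a flaw: a shortest path between two diametral vertices need not dominate $V(G)$ within distance $3$, so your first step does not deliver the covering property that the rest of the routing argument relies on. The actual Bau--Dankelmann argument grows a dominating backbone iteratively (much as Lemma \ref{lem:main-lemma} of this paper does) rather than taking a single diametral geodesic. The remainder of your outline (orienting ears so that every off-backbone vertex has both a short in-path and a short out-path) is the right shape of argument but is only a plan, not a proof.
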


In the same paper, Bau and Dankelmann \cite{Bau-Dankelmann2015} posed the problem of determining the smallest possible value $c$ for which $\ordiam(G) \leq c \cdot  \frac{3n}{\delta+1}+O(1)$ holds. Theorem \ref{thm:Bau-Dankelmann} implies that $1\leq c\leq 11/3$. The upper bound on $c$  was improved to $7/3$ by Surmacs \cite{Surmacs2017} and was further improved recently by the first author \cite{Cochran2023+} to $2$. In this paper, we asymptotically answer Bau and Dankelmann's question by showing that the smallest value $c$ for which $\ordiam(G) \leq c \cdot\frac{3n}{\delta+1}+O(1)$ holds for all $\delta \geq 2$ is indeed $1$.
 
 \begin{theorem}\label{thm:closed-bound}
 For any $\epsilon > 0$, there exists some constant $C=C(\epsilon)$ such that for every bridgeless $n$-vertex graph $G$ with minimum degree $\delta \geq 3$ and $n$ sufficiently large, $\ordiam(G) \leq (3+\epsilon) \frac{n}{\delta-2}+ C$.
 \end{theorem}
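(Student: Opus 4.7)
The plan is to follow the philosophy of Erdős--Pach--Pollack--Tuza: produce a short ``dominating spine'' and then orient the graph so that its oriented diameter essentially matches the spine length. Fixing $\epsilon>0$, my target is a spine $P=v_0v_1\ldots v_L$ with $L\le (1+\epsilon/6)\cdot 3n/(\delta-2)+O(1)$ whose closed neighborhood covers $V(G)$, together with a strong orientation $D$ of $G$ such that both directed distances $d_D(v_0,v_L)$ and $d_D(v_L,v_0)$ are at most $L+o(L)$, while every vertex $u\notin P$ lies within additive $O(1)$ directed distance of $P$ in each direction. Combined, these give the stated bound $(3+\epsilon)\,n/(\delta-2)+C$.

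For the spine, I would take a diametral geodesic $v_0v_1\ldots v_L$ and consider the BFS level sets $L_i=\{v:d_G(v_0,v)=i\}$. A vertex of $L_i$ has neighbors only in $L_{i-1}\cup L_i\cup L_{i+1}$, so the closed neighborhood $N_G[v_{3j+1}]$ lies inside $L_{3j}\cup L_{3j+1}\cup L_{3j+2}$; summing over $j$ yields the classical estimate $L\le 3n/(\delta+1)+O(1)$, which is already within the target. The weaker $\delta-2$ denominator (rather than $\delta+1$) appears because I will reserve two of the $\delta$ neighbors at each spine vertex in order to build the orientation gadgets described below; only $\delta-2$ neighbors per spine vertex then contribute to the pure domination count, and a small reshuffling of the spine (swapping in vertices of higher unused degree) keeps the counting tight up to a $(1+\epsilon)$ slack.

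To orient, I would use the bridgeless hypothesis together with $\delta\ge 3$ to construct an ear decomposition of $G$ rooted at $P$. All spine edges are oriented forward from $v_0$ toward $v_L$. For each spine segment $v_iv_{i+1}$, bridgelessness produces a short ``bypass ear'' passing through the reserved neighbors of $v_i$ and $v_{i+1}$; every bypass ear is oriented backward, so chaining them yields a directed $v_L$-to-$v_0$ walk of length at most $L+o(L)$. Off-spine vertices, already within distance $1$ of $P$ by the dominating property, are placed on short cycles with $P$ and oriented to possess both an in- and an out-neighbor in $P$, contributing only an additive constant to the diameter.

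The main obstacle is ensuring the reverse route has length $L+o(L)$ rather than a trivial $2L$ or worse. This demands that almost every bypass ear has length $O(1)$, not merely $O(L)$. I would handle this by calling a spine segment \emph{good} if two reserved neighbors of $v_i$ and $v_{i+1}$ are either adjacent or share a common neighbor, and \emph{bad} otherwise; a counting argument using $\delta\ge 3$ and the BFS level structure should show that bad segments comprise at most an $\epsilon$-fraction, while their cumulative extra length is absorbed into the $\epsilon L$ budget, with the constant $C(\epsilon)$ soaking up all additive error. This efficient-reverse-route lemma, combined with the partition of each spine vertex's degree into ``dominating'' and ``gadget'' parts, is where the genuinely new ingredient beyond the $c=2$ result of \cite{Cochran2023+} enters, and is the most delicate part of the argument.
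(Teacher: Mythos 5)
Your proposal has two genuine gaps, and they sit exactly at the points you yourself flag as delicate.

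First, the spine. A diametral geodesic does not dominate $V(G)$, and in general no single shortest path does: the Erd\H{o}s--Pach--Pollack--Tuza argument bounds the diameter by packing disjoint balls along a geodesic, but it says nothing about every vertex of $G$ lying within distance $1$ (or even $O(1)$) of that geodesic. So the very first object you posit --- a path $P$ of length $\approx 3n/(\delta-2)$ whose closed neighborhood covers $V(G)$ --- need not exist. The paper instead builds the covering structure iteratively: a nested sequence of oriented subgraphs $H_0\subseteq\cdots\subseteq H_m$, each obtained by growing a longest BFS-type path $P$ out of the current $H_i$ together with a return path $Q$, terminating only when every vertex is within a constant $L(\epsilon)$ of $H_m$; the extension lemma (Lemma \ref{lem:extension}) then absorbs the remaining vertices at additive constant cost. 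Relatedly, your accounting for the denominator $\delta-2$ (reserving two neighbors per spine vertex for gadgets) is not what makes the count work in the paper: the loss from $\delta+1$ to $\delta-2$ comes from bounding the pairwise overlaps among the closed neighborhoods of the selected vertices (the auxiliary overlap multigraph is shown to have maximum degree at most $6$, costing $3$ per selected vertex).

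Second, and more seriously, the reverse route. You correctly identify that the theorem hinges on a backward route of length $L+o(L)$ rather than $2L$, but your proposed mechanism --- declaring a segment bad when the reserved neighbors of $v_i,v_{i+1}$ are neither adjacent nor share a common neighbor, and asserting that a counting argument "should show" bad segments form at most an $\epsilon$-fraction --- has no justification and fails in general: nothing about $\delta\ge 3$, bridgelessness, or BFS levels forces consecutive spine vertices to have nearby reserved neighbors, and the return route genuinely can be long and far from $P$. The paper's resolution is structurally different: it takes the return path $Q$ explicitly (a shortest path from the endpoint of $P$ back to $H_i$ consistent with the orientation of $P$), selects every third vertex on \emph{both} $P$ and $Q$, and charges the traversal of $P$ and $Q$ against the domination count of the \emph{larger} of the two selected sets. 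The key quantitative step is an averaging argument (Case 1 of Claim \ref{cl:new-to-new}): for the hard pairs there are two candidate routes whose total length is at most $|E(P)|+|E(Q)|+O(1)$, so the shorter one has length at most $\tfrac32(|A'_{i+1}|+|B'_{i+1}|)+O(1)\le 3\max\{|A_{i+1}|,|B_{i+1}|\}+O(1)$. Without some substitute for this idea of paying for the return path with the vertices it dominates, your outline does not close.
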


 We remark that the proof of Theorem \ref{thm:closed-bound} can be easily converted to a polynomial-time algorithm (with respect to $n$), which for
a given bridgeless graph of order $n$ and minimum degree $\delta$, finds an orientation with the above diameter upper bound. 

We also believe that with more technical analysis, the denominator $\delta-2$ in Theorem \ref{thm:closed-bound} can be improved to $\delta-1$ with the same method. However, for clarity of the paper, we choose to prove the 
$(3+\epsilon) \frac{n}{\delta-2}+ C$ upper bound instead, as it has the same asymptotic implication when $\delta \to\infty$. In light of Theorem \ref{thm:closed-bound} and the result of Erd\H{o}s, Pach, Pollack and Tuza \cite{EPPT1989}, we also make the following conjecture.

\begin{conjecture}\label{conj:Cochran-Wang}
There exists a constant $C$ such that for every bridgeless graph $G$ of order $n$ and minimum degree $\delta\geq 2$,
$$\ordiam(G) \leq \frac{3n}{\delta+1}+C.$$
\end{conjecture}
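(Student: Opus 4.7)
The plan is to refine the strategy of Theorem~\ref{thm:closed-bound} so as to match the Erd\H{o}s--Pach--Pollack--Tuza bound exactly. The key idea is to build a bridgeless ``backbone'' subgraph $H$ of $G$ whose vertex count is already at most $\frac{3n}{\delta+1}+O(1)$, equip it with a strong orientation of oriented diameter $|V(H)|+O(1)$, and then attach every remaining vertex to $H$ through short directed detours.

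First, I would show that every bridgeless graph $G$ with $\delta(G)\ge 2$ contains a subgraph $H$ satisfying (i) $H$ is bridgeless, (ii) $|V(H)|\le \tfrac{3n}{\delta+1}+C_0$ for an absolute constant $C_0$, and (iii) every vertex of $V(G)\setminus V(H)$ has two internally disjoint paths to $H$ of length at most $C_0$. The construction begins with a shortest path $P$ realizing $\diam(G)$; the Erd\H{o}s--Pach--Pollack--Tuza argument that three consecutive BFS-layers jointly contain at least $\delta+1$ vertices yields $|V(P)|\le \tfrac{3n}{\delta+1}+O(1)$. To promote $P$ to a bridgeless $H$, I would cover each edge of $P$ by a short cycle (which exists since $G$ is bridgeless), chosen greedily so that the union introduces only $o(n/\delta)$ additional vertices. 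Since $H$ then consists of the long backbone $P$ together with short ears, orienting $P$ as a directed path and each ear to cycle back yields a strong orientation of $H$ whose oriented diameter is at most $|V(P)|+O(1)\le \tfrac{3n}{\delta+1}+O(1)$.

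The remaining vertices are handled by ear-decomposing $G$ relative to $H$ and orienting each ear so that every internal vertex lies on a short directed cycle through $H$; this forces every $u\notin V(H)$ to have in- and out-distance $O(1)$ to $V(H)$. Combining everything, any two vertices $u,v\in V(G)$ satisfy
\[
d(u,v) \;\le\; d(u,V(H)) + \ordiam(H) + d(V(H),v) \;\le\; O(1) + \tfrac{3n}{\delta+1} + O(1) + O(1),
\]
which absorbs into a single additive constant $C$.

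The hardest step is the greedy cycle-cover underlying the bridgeless-backbone construction. A priori, a shortest cycle through an edge $e$ of $P$ can be long, and its internal vertices can overlap with the neighborhoods that underwrite the Erd\H{o}s--Pach--Pollack--Tuza density count, so naive greediness may add $\Theta(n/\delta)$ extra vertices and destroy the sharp coefficient $3$. Overcoming this will likely require a ``two-parallel-path'' decomposition, in which two near-dominating paths of length $\tfrac{3n}{\delta+1}+O(1)$ sharing their endpoints together form the bridgeless $H$ of comparable order; budgeting the domination contributions between the two paths is the main technical hurdle. The case $\delta=2$ will also demand a separate direct analysis, since $G$ is then essentially a long subdivision of a $2$-edge-connected multigraph, so the short-ear hypothesis used in the general construction is unavailable and the detour length must be controlled by a finer accounting of the ear lengths.
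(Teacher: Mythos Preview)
The statement you are attempting is Conjecture~\ref{conj:Cochran-Wang}, which the paper explicitly leaves open; there is no proof in the paper to compare against. The paper establishes only the weaker Theorem~\ref{thm:closed-bound}, with coefficient $3+\epsilon$ and denominator $\delta-2$, and remarks that sharpening the denominator to $\delta-1$ should be possible ``with more technical analysis,'' but it does not claim the full conjecture.

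As a proof plan your outline is a plausible heuristic, but it has genuine gaps precisely at the points you yourself flag, and those gaps are the content of the open problem. First, nothing guarantees that a diametral path $P$ can be promoted to a bridgeless backbone $H$ by adjoining only $O(1)$ (or even $o(n/\delta)$) extra vertices: the shortest cycle through an edge of $P$ may have length $\Theta(n)$, and your greedy cycle-cover has no mechanism to prevent this. Second, the Erd\H{o}s--Pach--Pollack--Tuza layering bounds $|V(P)|$ but says nothing about $d_G(v,P)$ for a general vertex $v$, so your claimed property~(iii) that every $v\notin V(H)$ reaches $H$ by two internally disjoint paths of length at most $C_0$ is unsupported; without it the ear-attachment step cannot yield $O(1)$ in- and out-distance to $H$. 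The ``two-parallel-path'' fix you propose is essentially what the paper already implements in Lemma~\ref{lem:main-lemma} via the interacting paths $P$ and $Q$ and the bookkeeping sets $A_{i+1},B_{i+1}$, and even that machinery only achieves denominator $\delta-2$ and an $\epsilon$-loss in the leading constant. Eliminating both losses simultaneously is exactly what remains open, and your sketch does not supply the missing idea.
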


We remark again that if true, the upper bound in Conjecture \ref{conj:Cochran-Wang} is tight (up to the constant), as shown by a construction given by Bau and Dankelmann \cite{Bau-Dankelmann2015}. For the self-completeness of the paper, we present another similar example of graphs of given minimum degree $\delta$ (where $\delta \geq 4$) and large order $n$, in which every strongly connected orientation has diameter at least $\frac{3n}{\delta+1}+O(1)$. Given vertex-disjoint graphs $G_1, G_2, \cdots, G_r$, define the \textit{sequential join} $G_1 + G_2 + \cdots + G_r$ of $G_1,G_2, \cdots, G_r$, as the graph obtained from their union by adding an edge from every vertex of $G_i$ to every vertex of $G_{i+1}$ for all $i\in [r-1]$. Given $\delta \geq 4$ and $k\in \mathbb{N}$, consider the graph 
\begin{align*}
    G_{\delta,k} &= K_{\delta-1} + K_2 + K_2 + K_{\delta-3} + K_2 + K_2 + K_{\delta-3} + \cdots  \\
                  & \quad \quad + K_2 + K_2 + K_{\delta-3} + K_2 + K_2 + K_{\delta-1},
\end{align*}
where $K_2 + K_2 + K_{\delta-3}$ is repeated $k$ times. It is easy to check that this graph has minimum degree $\delta$, order $n= (\delta-1) + (\delta+1) \cdot k + (\delta+3) = (\delta+1)(k+2)$, and diameter $3k+3$. Hence the $\ordiam(G_{\delta,k}) \geq \diam(G_{\delta,k}) = \frac{3n}{\delta+1}-3$. 

\section{Main Lemma}
 In this section, we show the main lemma needed for the proof of Theorem \ref{thm:closed-bound}. Given positive integers $s$ and $t$, let $[t]:= \{i \in \mathbb{Z}: 1\leq i\leq t\}$ and $[s,t]:= \{i\in \mathbb{Z}: s\leq i\leq t\}$. Given a $u-v$ path $P$, we use $\accentset{\circ}{P}$ to denote the \textit{interior} of the path $P$, i.e., $\accentset{\circ}{P}$ = $P-\{u,v\}$. 
 
 \begin{lemma}\label{lem:main-lemma}
Let $\epsilon>0$. Given a bridgeless graph $G$ of order $n$ and minimum degree $\delta \geq 3$, there exists some constant $L(\epsilon) >0$, a sequence of bridgeless oriented subgraphs $H_0 \subseteq H_1\subseteq H_2 \subseteq \cdots \subseteq H_m \subseteq G$ (for some $m \in \mathbb{N}$) and a sequence of vertex subsets $S_0 \subseteq S_1\subseteq S_2 \subseteq \cdots \subseteq S_m$ such that for each $i\in [m]$, $S_i\subseteq V(H_i)$ and 
 \begin{enumerate}[(a)]
     \item $\diam(H_i) \leq (3+\epsilon)|S_i|$;
     \item $\abs*{\bigcup_{v\in S_i} N[v]} \geq (\delta-2)|S_i|$;
     \item For all $v\in V(G)$, $d_G(v, H_m) \leq L(\epsilon)$.
 \end{enumerate}
 \end{lemma}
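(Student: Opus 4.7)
The construction is inductive, building up $(H_i, S_i)$ one extension at a time. Initialize $H_0$ as a shortest cycle of $G$ (which exists by bridgelessness) and $S_0 = \emptyset$; no conditions are imposed at $i = 0$. At each step $i \geq 0$, if $d_G(v, H_i) \leq L(\epsilon)$ for every $v \in V(G)$, terminate with $m = i$ so that (c) is satisfied. Otherwise, choose $v \in V(G)$ at distance $k := d_G(v, V(H_i))$ exceeding a threshold of order $L(\epsilon)$, and extend $H_i$ to $H_{i+1}$ by attaching an oriented ear through $v$, as described below.

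\textbf{Extension and verification of (b).} Using 2-edge-connectivity of $G$ (from bridgelessness), produce two edge-disjoint $v$-to-$V(H_i)$ paths in $G$; truncate each at its first meeting with $V(H_i)$ to obtain internally $V(H_i)$-disjoint paths $P_1, P_2$, and let the ear $Q := P_1 \cup P_2$. Choose $P_1$ to be a $G$-geodesic, so $\len(P_1) = k$. Orient $Q$ as a directed path if the endpoints of $P_1, P_2$ in $V(H_i)$ are distinct, and as a directed cycle otherwise, so that $H_{i+1} := H_i \cup Q$ is bridgeless and strongly orientable. Form $S_{i+1}$ by adjoining to $S_i$ the vertices of $P_1$ at $G$-distances $3, 6, 9, \ldots$ from $V(H_i)$, producing $\lfloor k/3 \rfloor$ new $S$-vertices. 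Since $P_1$ is a geodesic, these new vertices are at pairwise $G$-distance $\geq 3$ and at distance $\geq 3$ from $V(H_i)$, so their closed $G$-neighborhoods are pairwise disjoint and disjoint from $N_G[u]$ for every $u \in S_i \subseteq V(H_i)$. Each new vertex thus contributes $\geq \delta + 1 > \delta - 2$ previously uncovered vertices, preserving (b).

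\textbf{Verification of (a) and main obstacle.} Attaching the oriented ear $Q$ increases the diameter of $H_i$ by at most $\len(Q) = \len(P_1) + \len(P_2) \leq k + \len(P_2)$, while $|S_{i+1}| - |S_i| \approx k/3$. Naively this yields a ratio at most $3(1 + \len(P_2)/k)$, which can exceed $3 + \epsilon$ if the return branch $P_2$ is much longer than the geodesic $P_1$. The main technical obstacle is therefore to control $\len(P_2)$. The plan is to show, via a BFS-from-$V(H_i)$ analysis that uses both bridgelessness and $\delta \geq 3$, that one can choose $v$ and $P_2$ with $\len(P_2) \leq k + O(L(\epsilon))$: every BFS-tree edge on a root-to-$v$ geodesic lies on a small cycle using non-tree edges at nearby BFS levels, and a careful (e.g.\ greedy) choice of $v$ together with such a returning cycle balances the two branches of $Q$. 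The additive $O(L(\epsilon))$ slack is then absorbed by the $\epsilon k$ margin once $L(\epsilon)$ is chosen sufficiently large (depending on $\epsilon$), yielding (a). In the corner case where no such short $P_2$ exists, one instead harvests additional $S$-vertices along $P_2$ at pairwise $G$-distance $\geq 3$ with novel neighborhoods to rebalance the ratio. Termination is immediate since $|V(H_i)|$ strictly increases at each step and $G$ is finite; the triple $(H_m, S_m, L(\epsilon))$ then satisfies (a)--(c).
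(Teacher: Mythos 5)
Your setup (grow an oriented subgraph by ears, harvest every third vertex of a geodesic branch to certify condition (b), terminate when everything is within $L(\epsilon)$) matches the paper's skeleton, and your verification of (b) for the geodesic branch $P_1$ is sound. But the step you yourself flag as ``the main technical obstacle'' --- controlling the return branch $P_2$ --- is precisely where the entire difficulty of the lemma lives, and neither of your two proposed resolutions works as stated. First, the claim that one can always choose $v$ and $P_2$ with $\len(P_2)\leq k+O(L(\epsilon))$ is false: attach a long cycle $C$ of length $10k$ to $H_i$ and take $v$ on $C$ at distance $k$ from the attachment; then $P_1$ has length $k$ but every edge-disjoint return path has length about $9k$, and no BFS argument or choice of $v$ repairs this. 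Second, your fallback --- ``harvest additional $S$-vertices along $P_2$'' --- collides head-on with condition (b): $P_2$ is not a $G$-geodesic, so its every-third vertices need not be at pairwise $G$-distance $3$, and worse, their closed neighborhoods can coincide almost entirely with those of the vertices already harvested from $P_1$ (the two branches can run ``parallel'' at distance $1$ or $2$). If you count both sets you double-count neighborhoods and (b) fails; if you count only one set you lose the $3$-to-$1$ ratio needed for (a).

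The paper's proof is essentially an extended resolution of exactly this tension. It defines a dichotomy for each candidate vertex on one branch versus the marked vertices on the other: either the two are ``overlapping'' (joined by two independent paths of length $\leq 2$), in which case one obtains an orientable shortcut between $P$ and $Q$ that lets routes switch branches cheaply (Claims 1--3), or they are ``almost non-overlapping,'' in which case the vertex can be added to the harvest with its neighborhood contributing new vertices up to a bounded-multiplicity correction ($\Delta(\Gamma)\leq 6$, Claim 5, which is where the loss from $\delta+1$ down to $\delta-2$ comes from). It then takes $S_{i+1}'$ to be the \emph{larger} of the two augmented sets $A_{i+1},B_{i+1}$, and bounds the diameter by an averaging argument over the two possible routes (forward along $P$ then back along $Q$, versus shortcut to $Q$, back to $s_i$, forward along $P$, shortcut to $Q$), using $\min\{|E(P_1)|,|E(P_2)|\}\leq \tfrac12(|E(P_1)|+|E(P_2)|)$ to get $3\max\{|A_{i+1}|,|B_{i+1}|\}+O(1)$. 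None of this machinery is present or replaceable by the one-sentence sketch in your proposal, so the argument as written has a genuine gap at its central step.
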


\begin{proof}[Proof of Lemma \ref{lem:main-lemma}]
Given $\epsilon> 0$, let $L(\epsilon) = 100/\epsilon.$
Fix an arbitrary vertex $s_0 \in V(G)$. 
Initially set $H_0 = G[\{s_0\}]$ and $S_0 = \{s_0\}$. Suppose we have defined $H_i$ and $S_i$ satisfying $(a)$ and $(b)$ of the lemma for some $i\geq 0$. If $d_G(v, H_i) \leq L(\epsilon)$ for all $v\in V(G)\backslash V(H_i)$, then we set $m = i$ and we are done. Otherwise, we will define $H_{i+1}$ and $S_{i+1}$ in the rest of the section and show that $(a)$ and $(b)$ hold for $H_{i+1}$ and $S_{i+1}$. The proof of $(c)$ follows easily from the terminating condition above. 

Let $P = u_0 u_1 u_2 \cdots u_p$ be a longest path in $G$ such that $p \equiv 0$ (mod $3$), $u_0\in V(H_i)$ and $d(u_j, u_0) = j$ for all $j\in [p]$.
Orient all edges $u_j u_{j+1}$ in $P$ from $u_j$ to $u_{j+1}$ for $0\leq j< p$. Let $Q = w_0 w_1\cdots w_q w_{q+1}$ be a shortest path from $u_p$ to some vertex in $H_i$ that is \textit{consistent} with the orientation of $\ori{P}$, i.e., none of the edges in $Q$ is a directed edge $\ori{u_{j+1}u_j}$ for any $j\in [0, p-1]$. Orient all edges $w_j w_{j+1}$ in $Q$ from $w_j$ to $w_{j+1}$ (if not yet oriented in $\ori{P}$). 
Note that $w_0 = u_p$ and $w_{q+1} \in V(H_i)$. For convenience, given a vertex $v \in V(P)$, define $\ind_P(v) = i$ if $v = u_i$. Similarly, given a vertex $v\in V(Q)$, define  $\ind_Q(v) = i$ if $v = w_i$.

Initially, let $A'_{i+1} = \{u_j \in V(P): j\in [p] \textrm{ and $j\equiv 0$ (mod $3$)}\}$ and let $B'_{i+1} = \{w_j \in V(Q): j\in [0, q-2] \textrm{ and $j\equiv 0$ (mod $3$)}\}$. Observe that $|E(P)|=3|A_{i+1}'|$ and $|E(Q)|\leq 3|B_{i+1}'|+2$.
For a technical reason, among all choices of $Q$, we pick $Q$ such that $|V(Q) \cap A_{i+1}'|$ is maximum. 
Given two distinct vertices $u$ and $w$, we say $u$ and $w$ are \textit{almost non-overlapping} if there are no two \textit{independent} (i.e., internally vertex-disjoint) paths of length at most two between $u$ and $w$, and \textit{overlapping} otherwise. Let $$A_{i+1}'' = \{v \in B'_{i+1}\backslash A'_{i+1}: \textrm{ $v$ and $a$ are almost non-overlapping for all } a\in A'_{i+1}\},$$ and
$$B_{i+1}'' = \{v \in A'_{i+1}\backslash B'_{i+1}: \textrm{ $v$ and $b$ are almost non-overlapping for all } b\in B'_{i+1}\}.$$

\begin{figure}[htb]
	\begin{center}
        	\resizebox{11cm}{!}{\input{highway.tikz}}
    \end{center} 
    \caption{The main paths $P$ and $Q$.}
    \label{fig:main_paths}
\end{figure}

Let $A_{i+1} = A_{i+1}' \cup A_{i+1}''$, $B_{i+1} = B_{i+1}' \cup  B_{i+1}''$. Let $S_{i+1}'$ be the larger set of $A_{i+1}$ and $B_{i+1}$ (arbitrary if same size) and $S_{i+1} = S_i \cup S_{i+1}'$. See Figure \ref{fig:main_paths} for an illustration. In Figure \ref{fig:main_paths}, the vertices in $A_{i+1}'$ are circled, and the vertices in $B_{i+1}'$ are boxed.

In order to travel between vertices in $P$ and $Q$ efficiently, we want to make sure that we could `transit' from vertices in $P$ to vertices in $Q$ and vice versa through certain short cuts. In particular, we want to show that vertices in $A_{i+1}'\backslash B_{i+1}''$, which are on $P$, can transit with vertices on $B_{i+1}'$ in a short distance; and similarly, vertices in $B_{i+1}'\backslash A_{i+1}''$, which are on $Q$, can transit with vertices on $A_{i+1}'$ in a short distance. 
The next three claims establish such properties. 

\begin{claim}\label{cl:short-cut}
For every $a \in A_{i+1}'\backslash B_{i+1}$, there exists some $b \in B_{i+1}'\backslash A_{i+1}$ such that 
$b$ is overlapping with $a$, i.e.,
(1) either $ab\in E(G)$ and there exists another path $a c_1 b$ of length $2$; or (2) $ab\notin E(G)$, and there exist two distinct paths $a c_1 b$ and $a c_2 b$ of length $2$ between $a$ and $b$.
Similarly, for every $b \in B_{i+1}'\backslash A_{i+1}$, there exists some $a\in A_{i+1}' \backslash B_{i+1}$ such that $a$ is overlapping with $b$.
\end{claim}
\begin{proof}
Let $a\in A_{i+1}'\backslash B_{i+1}\subseteq A_{i+1}'\backslash B_{i+1}'$. Suppose for contradiction that for every $b\in B_{i+1}'$, none of (1) and (2) happens. That implies $a$ is almost non-overlapping with all vertices in $B_{i+1}'$. It follows by the definition of $B_{i+1}''$ that $a \in B_{i+1}''$, contradicting that $a \notin B_{i+1}$. Hence there exists some $b \in B_{i+1}'$ such that either (1) or (2) holds.
Now we claim that $b \notin A_{i+1}$. Suppose otherwise that $b\in A_{i+1}= A_{i+1}'\cup A_{i+1}''$. Clearly $b\notin A_{i+1}'$ since $d(a,b) \leq 2$. Hence $b\in A_{i+1}''$. However, by definition of $A_{i+1}''$, $b$ is almost non-overlapping with all vertices in $A_{i+1}'$, contradicting that (1) or (2) holds for $a$ and $b$. Thus $b \in B_{i+1}'\backslash A_{i+1}$.

Let $b\in B_{i+1}'\backslash A_{i+1}$. Similar to before, there exists some $a\in A_{i+1}'$ such that either (1) or (2) holds. We claim that $a\notin B_{i+1}$. Suppose otherwise $a\in B_{i+1}= B_{i+1}'\cup B_{i+1}''$. Similar to before, $a \notin B_{i+1}''$, otherwise $a$ is almost non-overlapping with $b$, giving a contradiction. Hence $a\in B_{i+1}'$. Thus $a \in A_{i+1}'\cap B_{i+1}'$, which implies that $a \in V(P)\cap V(Q)$. Note that $b\in V(Q)$. By our definition of $B_{i+1}'$, $d_Q(a,b) \geq 3$. Thus since $a$ and $b$ are overlapping, it must follow that $N[a]\cap N[b]\subseteq V(P)$. Moreover, in the two independent paths $P_1, P_2$ of length at most $2$ between $a$ and $b$, there must be two directed edges $e_1$, $e_2 \in E(\ori{P})$ with $e_1\in E(P_1)$ and $e_2\in E(P_2)$
such that $e_1$ and $e_2$ are both oriented towards $b$ (if $\ind_Q(b)<\ind_Q(a)$) or away from $b$ (if $\ind_Q(a)<\ind_Q(b)$), as otherwise we will have $d_Q(a,b)\leq 2$ by our choice of $Q$. However, such orientation is impossible due to the minimality of $P$ (i.e., $P$ is a shortest path between $u_0$ and $u_p$). Hence $a \in A_{i+1}'\backslash B_{i+1}$. 
\end{proof}

\begin{claim}\label{cl:orientable}
\begin{enumerate}[(i)]
    \item For any pair of distinct vertices $a_1,a_2 \in A_{i+1}'\backslash B_{i+1}$ and any vertex $b_1 \in B_{i+1}'\backslash A_{i+1}$, every path of length at most $2$ between $a_1$ and $b_1$ is edge-disjoint with every path of length at most $2$ between $a_2$ and $b_1$.

    \item For any pair of distinct vertices $b_1,b_2\in B_{i+1}'\backslash A_{i+1}$ and any vertex $a_1 \in A_{i+1}'\backslash B_{i+1}$, if there exists an edge $e$ contained in both a path of length at most $2$ between $a_1$ and $b_1$ and a path of length at most $2$ between $a_1$ and $b_2$, then $e\in E(\ori{P})$.
\end{enumerate}
\end{claim}

\begin{figure}[htb]
	\begin{center}
        \begin{minipage}{.2\textwidth}
        		\resizebox{3cm}{!}{\begin{tikzpicture}[scale=1, Wvertex/.style={circle, draw=black, fill=white, scale=2}, bvertex/.style={circle, draw=black, fill=black, scale=0.4},rvertex/.style={circle, draw=red, fill=red, scale=0.2}]

\node [bvertex, label={[font=\small] right:$u$}] (u1) at (0,0) {};

\node [bvertex, label={[font=\small] left:$a_1$}] (a1) at (-1,-1.7) {};

\node [bvertex, label={[font=\small] right:$a_2$}] (a2) at (1,-1.7) {};

\node [bvertex] (c) at (-2,0) {};

\node [bvertex] (d) at (2,0) {};

\node [bvertex, label={[font=\small] above:$b_1$}] (b1) at (0,1.7) {};

\draw (b1) -- (u1) -- (a1)--(c)--(b1);
\draw (u1) --(a2) --(d) --(b1);

\end{tikzpicture}	}
        \end{minipage}
            \hspace{2cm}
        \begin{minipage}{.3\textwidth}
        		\resizebox{3cm}{!}{\begin{tikzpicture}[scale=1, Wvertex/.style={circle, draw=black, fill=white, scale=2}, bvertex/.style={circle, draw=black, fill=black, scale=0.4},rvertex/.style={circle, draw=red, fill=red, scale=0.2}]

\node [bvertex, label={[font=\small] right:$u\in V(P)$}] (u1) at (0,0) {};

\node [bvertex, label={[font=\small] left:$b_1$}] (b1) at (-1,-1.7) {};

\node [bvertex, label={[font=\small] right:$b_2$}] (b2) at (1,-1.7) {};

\node [bvertex] (c) at (-2,0) {};

\node [bvertex] (d) at (2,0) {};

\node [bvertex, label={[font=\small] above:$a_1\in V(P)$}] (a1) at (0,1.7) {};

\draw (a1) -- (u1) -- (b1)--(c)--(a1);
\draw (u1) --(b2) --(d) --(a1);

\end{tikzpicture}	}
        \end{minipage}
    \end{center} 
    \caption{Independent paths between $a_1, a_2$ and $b_1$, and between $b_1, b_2$ and $a_1$.}
    \label{fig:claim2}
\end{figure}

\begin{proof}
First, observe that, in (i), if there exists an edge contained in both a path of length at most $2$ between $a_1$ and $b_1$ and a path of length at most $2$ between $a_2$ and $b_1$, then $N[a_1]\cap N[a_2]\cap N[b_1]\neq \emptyset$. The same holds for $b_1, b_2$ and $a_1$ in (ii). See Figure \ref{fig:claim2} for one of the cases.

The proof of (i) is fairly straightforward. By our assumption, $a_1,a_2 \in V(A_{i+1}')$, which implies that $d_G(a_1,a_2)\geq 3$. Hence, $N[a_1]\cap N[a_2]\cap N[b_1]\subseteq N[a_1]\cap N[a_2]=\emptyset$. Thus (i) follows from the observation above.

Now we will show (ii). Similarly, by our assumption, $b_1,b_2 \in V(B_{i+1}')$. If $N[b_1]\cap N[b_2]= \emptyset$, then we are done, as $N[b_1]\cap N[b_2]\cap N[a_1] \subseteq N[b_1]\cap N[b_2]=\emptyset$ and (ii) follows from the observation above.
Hence we can assume $N[b_1]\cap N[b_2]\neq \emptyset$. Now observe that $N[b_1]\cap N[b_2]\subseteq V(P)$ since otherwise we will have $d_{G- E(P)}(b_1,b_2)\leq 2$, contradicting the minimality of $Q$. Hence, any edge $e = a_1 u$ that satisfies the assumption in (ii) must have $u\in N[b_1]\cap N[b_2]\cap N[a_1] \subseteq V(P)$. Since $a_1\in V(P)$, it follows from the minimality of $P$ that $e\in E(\ori{P})$.
\end{proof}

By Claim \ref{cl:short-cut}, for each $a\in A_{i+1}'\backslash B_{i+1}$, there exists some $b_a\in B_{i+1}'\backslash A_{i+1}$ such that there are two independent paths of length at most $2$ between $a$ and $b_a$; for each $b\in B_{i+1}'\backslash A_{i+1}$, there exists some $a_b \in A_{i+1}'\backslash B_{i+1}$ such that there are two independent paths of length at most $2$ between $b$ and $a_b$. For each $a\in A_{i+1}'\backslash B_{i+1}$, arbitrarily pick some $b_a$ satisfying the property above; similarly, for each $b\in B_{i+1}'\backslash A_{i+1}$, arbitrarily pick some $a_b$ satisfying the property above (if the pair $\{b, a_b\}$ has not been picked before).
For each pair $\{a,b\}$ above, we obtain two independent paths of length at most $2$ between $a$ and $b$.
Let $C_{i+1}$ be the collection of the vertices in all such paths that are not in $V(P)\cup V(Q)$.
Also let $R_{i+1}$ be the collection of the edges in all such paths. 
Define $$H_{i+1}'= \lp V(P) \cup V(Q)\cup C_{i+1}, E(\ori{P})\cup E(\ori{Q})\cup R_{i+1}\rp,$$ and $H_{i+1} = H_{i}\cup H_{i+1}'$. Note that some edges in $R_{i+1}$ may not be oriented yet. We will orient them so that every vertex in $A_{i+1}'\backslash B_{i+1}$ can transit with vertices in $B_{i+1}'$ by using few edges in $H_{i+1}'$, and similarly every vertex in $B_{i+1}'\backslash A_{i+1}$ can transit with vertices in $A_{i+1}'$ by using few edges in $H_{i+1}'$. However, note that such paths satisfying the property above may travel within $H_i$.
Instead of making a separate argument for those cases, for clarity of the proof, when analyzing $H_{i+1}= H_i\cup H_{i+1}'$, we will contract $H_{i}$ into a single vertex $s_i$ and call the resulting multigraph $\hat{H}_{i+1}$.
Note that $s_i$ is considered a vertex in both $P$ and $Q$. Thus, $\ori{s_i u_1}\in E(\ori{P})$ and $\ori{w_q s_i}\in E(\ori{Q})$ in $\hh_{i+1}$.
Observe that for any vertices $x,y\in V(H_{i+1}')$, there exists a path $P_{xy}$ from $x$ to $y$ in $H_{i+1}$ such that
$$|E(P_{xy}) \cap E(H_{i+1}')| = d_{\hh_{i+1}}(x,y).$$  

\begin{claim}\label{cl:good-orientaiton}
There exists a strongly connected orientation of $\hh_{i+1}$ such that 
for each $a\in A_{i+1}'\backslash B_{i+1}''$, 
\newcounter{enumcounter}
\begin{enumerate}[(i)]
    \item there exist $b_1\in B_{i+1}'\cup \{s_i\}$ with $d_{\hh_{i+1}}(a,b_1) \leq 5$, and
    \item $b_2 \in B_{i+1}' \cup \{s_i\}$ with $d_{\hh_{i+1}}(b_2,a)\leq 5$;
    
    \setcounter{enumcounter}{\value{enumi}}
\end{enumerate}
moreover, for each $b\in B_{i+1}'\backslash A_{i+1}''$, 
\begin{enumerate}[(i)]
\setcounter{enumi}{\value{enumcounter}}
\item there exist $a_1\in A_{i+1}'\cup \{s_i\}$ with $d_{\hh_{i+1}}(b,a_1)\leq 5$ and
\item $a_2\in A_{i+1}'\cup \{s_i\}$ with $d_{\hh_{i+1}}(a_2, b)\leq 5$.
\end{enumerate}
\end{claim}

\begin{proof}
For each $a\in A_{i+1}'\backslash B_{i+1}''$, if $a\in A_{i+1}'\cap B_{i+1}'$, then (i) and (ii) hold by letting $b_1= b_2 = a$. Similarly, for each $b\in B_{i+1}'\backslash A_{i+1}''$, if $b\in A_{i+1}'\cap B_{i+1}'$, then (iii) and (iv) hold by letting $a_1= a_2 = b$. Thus it suffices to prove Claim \ref{cl:good-orientaiton} by showing that there exists a strongly connected orientation of $\hh_{i+1}$ such that  (i) and (ii) hold for all $a\in A_{i+1}'\backslash B_{i+1}$, and (iii) and (iv) hold for all $b\in B_{i+1}'\backslash A_{i+1}$. 
In the rest of the proof, we will orient the undirected edges in $\hh_{i+1}$ and show (i) and (ii). The proofs for (iii) and (iv) are symmetric and follow similar arguments.

By Claim \ref{cl:short-cut}, for every $a\in A_{i+1}'\backslash B_{i+1}$, there exists some $b_a\in B_{i+1}'\backslash A_{i+1}$ with two independent paths $P_1, P_2$ between $a$ and $b_a$ of length at most $2$. We will orient the edges in $P_1$ and $P_2$ for each $a\in A_{i+1}'\backslash B_{i+1}$ and its corresponding $b_a$, and similarly for each $b \in B_{i+1}'\backslash A_{i+1}$ and its corresponding $a_b$. Note that some edges in $P_1$ and $P_2$ may have already been oriented due to the orientation in the paths $\ori{P}$ and $\ori{Q}$. We will orient the rest of the edges in $P_1$ and $P_2$ based on the already oriented edges. Observe that by Claim \ref{cl:orientable}, such paths $P_1$ and $P_2$ for different pairs of $a$ and $b_a$ (similarly $b$ and $a_b$) are either edge-disjoint or only share an oriented edge in $P$. Thus the new orientations assigned to one pair of paths will not affect the orientation assignment for another pair of paths. Let $C$ be the cycle formed by $P_1\cup P_2$. We assume that $P_1$ and $P_2$ are both of length $2$. The case when one of $P_1$ and $P_2$ is an edge follows similar reasoning and is simpler. Let $P_1 = ac_1 b_a$ and $P_2 = ac_2 b_a$.

{\bf Case $1$}: All edges in $C$ are not oriented or all oriented in the same direction (i.e., can be completed to a directed cycle). Then we can orient the undirected edges in $C$ so that all edges in $C$ are oriented in the same direction. Since $|C|\leq 4$, (i) and (ii) hold by letting both $b_1$ and $b_2$ be $b_a$. 

{\bf Case $2$}: Suppose the two edges incident to $a$ in $P_1$ and $P_2$ are both oriented towards $a$. That implies that one of these edges (let's say $\ori{c_1 a}$) is in $P$ and the other (i.e., $\ori{c_2 a}$) is in $Q$, which in turn implies that $a$ is in $V(P)\cap V(Q)$. Hence we could then find some $b_1\in B_{i+1}'\cup \{s_i\}$ such that $d(a,b_1)\leq 4$ by traveling along $\ori{Q}$. Observe that $\ori{c_2 b_a}$ is not a directed edge, since otherwise $c_2 b_a \in V(\ori{P})$, contradicting the minimality of $P$. Similarly, $\ori{c_1 b_a}$ is not a directed edge. It follows $c_1 b_a$ and $c_2 b_a$ are either undirected, or oriented towards $c_1$ and $c_2$ respectively. In either case, we can assign orientations $\ori{b_a c_1}$ and $\ori{b_a c_2}$ (if not oriented before) to the edges $b_a c_1$ and $b_a c_2$. We can then let $b_2$ in (ii) be $b_a$ since $d_{\hh_{i+1}}(b_a, a)\leq 2$. 

\begin{figure}[htb]
	\begin{center}
        \begin{minipage}{.15\textwidth}
        		\resizebox{2.5cm}{!}{\begin{tikzpicture}[scale=1, Wvertex/.style={circle, draw=black, fill=white, scale=2}, bvertex/.style={circle, draw=black, fill=black, scale=0.4},rvertex/.style={circle, draw=red, fill=red, scale=0.2}, decoration={markings, mark= at position 0.8 with {\arrow[scale=2]{latex}}}]

\node [bvertex, label={[font=\normalsize] right:$a$}] (a) at (0,1.7) {};

\node [bvertex, label={[font=\normalsize] left:$b_a$}] (ba) at (0,-1.7) {};

\node [bvertex, label={[font=\normalsize] left:$c_1$}] (c1) at (-1,0) {};

\node [bvertex, label={[font=\normalsize] right:$c_2$}] (c2) at (1,0) {};

\draw (a) -- (c1) -- (ba)--(c2)--(a);
\draw [postaction={decorate}] (c1) -- (a);
\draw [postaction={decorate}] (c2) -- (a);

\end{tikzpicture}	}
        \end{minipage}
            \hspace{0.2cm}
        \begin{minipage}{.15\textwidth}
        		\resizebox{2.5cm}{!}{\begin{tikzpicture}[scale=1, Wvertex/.style={circle, draw=black, fill=white, scale=2}, bvertex/.style={circle, draw=black, fill=black, scale=0.4},rvertex/.style={circle, draw=red, fill=red, scale=0.2}, decoration={markings, mark= at position 0.8 with {\arrow[scale=2]{latex}}}]

\node [bvertex, label={[font=\normalsize] right:$a$}] (a) at (0,1.7) {};

\node [bvertex, label={[font=\normalsize] left:$b_a$}] (ba) at (0,-1.7) {};

\node [bvertex, label={[font=\normalsize] left:$c_1$}] (c1) at (-1,0) {};

\node [bvertex, label={[font=\normalsize] right:$c_2$}] (c2) at (1,0) {};

\draw (a) -- (c1) -- (ba)--(c2)--(a);
\draw [postaction={decorate}] (c1) -- (a);
\draw [postaction={decorate}] (ba) -- (c2);

\end{tikzpicture}	}
        \end{minipage}
             \hspace{0.2cm}
        \begin{minipage}{.15\textwidth}
        		\resizebox{2.5cm}{!}{\begin{tikzpicture}[scale=1, Wvertex/.style={circle, draw=black, fill=white, scale=2}, bvertex/.style={circle, draw=black, fill=black, scale=0.4},rvertex/.style={circle, draw=red, fill=red, scale=0.2}, decoration={markings, mark= at position 0.8 with {\arrow[scale=2]{latex}}}]

\node [bvertex, label={[font=\normalsize] right:$a$}] (a) at (0,1.7) {};

\node [bvertex, label={[font=\normalsize] left:$b_a$}] (ba) at (0,-1.7) {};

\node [bvertex, label={[font=\normalsize] left:$c_1$}] (c1) at (-1,0) {};

\node [bvertex, label={[font=\normalsize] right:$c_2$}] (c2) at (1,0) {};

\draw (a) -- (c1) -- (ba)--(c2)--(a);
\draw [postaction={decorate}] (c1) -- (a);
\draw [postaction={decorate}] (c1) -- (ba);

\end{tikzpicture}	}
        \end{minipage}
    \end{center} 
    \caption{Illustrations for Case $2$--$4$ (from left to right).}
    \label{fig:claim3}
\end{figure}

{\bf Case $3$}: Suppose $\ori{c_1 a}$ and $\ori{b_a c_2}$ are directed edges, or $\ori{c_2 a}$ and $\ori{b_a c_1}$ are directed edges. Without loss of generality we assume the former. Note that by the minimality of $P$, $\ori{c_1 a}$ and $\ori{b_a c_2}$ can not both be in $\ori{P}$. We claim that $\ori{c_1 a}$ and $\ori{b_a c_2}$ also can not both be in $\ori{Q}$. Otherwise, by the minimality of $\ori{Q}$, either $\ori{c_1 b_a},\ori{ac_2}\in E(\ori{P})$ (if $\ind_Q(b_a)<\ind_Q(c_1)$) or $\ori{b_a c_1},\ori{c_2 a}\in E(\ori{P})$ (if $\ind_Q(b_a)>\ind_Q(c_1)$), which again gives a contradiction by the minimality of $P$.

Hence one of $\ori{c_1 a}$ and $\ori{b_a c_2}$ is in $E(\ori{P})$ and the other is in $E(\ori{Q})$. We claim that $\ori{c_1 a}\in E(\ori{P})$. Otherwise $\ori{b_a c_2}\in E(\ori{P})$, which implies that $c_2\in V(P)$. By the minimality of $P$ and since $a\in V(P)$, we then obtain that $\ori{c_2 a}\in E(\ori{P})$ and we are done by Case $2$. Thus $\ori{c_1 a}\in E(\ori{P})$ and $\ori{b_a c_2}\in E(\ori{Q})$. Again since we are not in Case 2, $\ori{c_2 a}$ is not a directed edge. We can then orient $a$ towards $c_2$ (if not already oriented). Since $c_2\in V(Q)$, we can then find some $b_1 \in B_{i+1}'\cup \{s_i\}$ such that $d_{\hh_{i+1}}(a,b_1)\leq 5$ by traveling along $\ori{Q}$. Now if $b_a c_1$ is undirected or oriented towards $c_1$, we can orient $b_a$ towards $c_1$ and let $b_2$ in (ii) be $b_a$. Otherwise, $\ori{c_1 b_a} \in E(\ori{Q})$. It follows that $c_1\in V(P)\cap V(Q)$. We can then find $b_2\in B_{i+1}'$ such that $d_{\hh_{i+1}}(b_2,a)\leq 5$ again by traveling along $\ori{Q}$ backwards. 

{\bf Case $4$}: Suppose $\ori{c_1 a}$ and $\ori{c_1 b_a}$ are directed edges, or $\ori{c_2 a}$ and $\ori{c_2 b_a}$ are directed edges. Without loss of generality, we assume the former. Similar to before, we have $c_1\in V(P)\cap V(Q)$ and (ii) holds by traveling along $\ori{Q}$ backwards. Moreover, since we are not in Case 2 and Case 3, we can orient $a$ towards $c_2$, and orient $c_2$ towards $b_a$ (if not already oriented) so that $d_{\hh_{i+1}}(a, b_a)\leq 2$. Hence (i) holds. 

{\bf Case $5$}: Suppose $\ori{ac_1}$ and $\ori{a c_2}$ are both oriented away from $a$. Then $a\in V(P)\cap V(Q)$ and we are done as before by traveling along $\ori{Q}$ (forwards or backwards).

{\bf Case $6$}: Suppose $\ori{c_1 b_a}$ and $\ori{c_2 b_a}$ are both oriented towards $b_a$. Then we have that one of the two edges is in $\ori{P}$ and the other is in $\ori{Q}$. Without loss of generality, assume $\ori{c_1 b_a}\in E(\ori{P})$ and $\ori{c_2 b_a}\in E(\ori{Q})$. Since $a\in V(P)$, it follows that $\ori{a c_1}\in E(\ori{P})$. Thus (i) holds by letting $b_1$ be $b_a$. Since we are not in Case 5, we can orient $c_2$ towards $a$ (if not already oriented). Since $c_2\in V(Q)$, (ii) holds by traveling along $\ori{Q}$ backwards.

{\bf Case $7$}: Suppose $\ori{a c_1}$ and $\ori{c_2 b_a}$ are directed edges, or $\ori{a c_2}$ and $\ori{c_1 b_a}$ are directed edges. The arguments are similar to Case 3, and we leave the details to the readers.

{\bf Case $8$}: Suppose $\ori{a c_1}$ and $\ori{b_a c_1}$ are directed edges, or $\ori{a c_2}$ and $\ori{b_a c_2}$ are directed edges. The arguments are similar to Case 4, and we leave the details to the readers.

Note that in all the above cases, it is easy to check that every vertex can be reached from every other vertex in $\hh_{i+1}$ by using edges in $E(\ori{P})\cup E(\ori{Q})\cup R_{i+1}$. Hence we can conclude that $\hh_{i+1}$ is strongly connected.
\end{proof}

By Claim \ref{cl:good-orientaiton}, there exists a strongly connected orientation of $\hh_{i+1}$ satisfying the conditions in Claim \ref{cl:good-orientaiton}. Since, by induction, $H_i$ is strongly connected, it follows that $H_{i+1}$ is also strongly connected.

In the next claim, we will show that $\diam(H_{i+1}) \leq (3+\epsilon)|S_{i+1}|$. Recall the notation that $H_{i+1} = H_{i} \cup H_{i+1}'$, $S_{i+1} = S_i\cup S_{i+1}'$, where $S_{i+1}'$ is the larger set of $A_{i+1}$ and $B_{i+1}$.
Note that by induction, we can assume that
    $\textrm{diam}(H_{i}) \leq (3+\epsilon)|S_{i}|$.

\begin{claim}\label{cl:new-to-new}
For any two distinct vertices $x_0,y_0 \in V(H_{i+1})$, 
\begin{equation}\label{eq:travel}
d_{H_{i+1}}(x_0,y_0)\leq 3|S_{i+1}| + 32 \leq (3+\epsilon)|S_{i+1}|.
\end{equation}
\end{claim}
\begin{proof}
We first justify the second inequality. Note that $|S_{i+1}|\geq |S_{i+1}'|\geq |A_{i+1}'|\geq \frac{1}{3}L(\epsilon) \geq \frac{1}{3} \cdot \frac{100}{\epsilon}$. Thus we have that
$$(3+\epsilon)|S_{i+1}|\geq 3|S_{i+1}|+ 32.$$
Hence we will proceed to justify the first inequality.
We remark that, to ensure the clarity of the proof, we make no effort to optimize the constant $32$, as it is not significant when $n$ is sufficiently large.
By induction, we know that $\textrm{diam}(H_{i}) \leq (3+\epsilon)|S_{i}|$. So we can assume that at least one of $x$ and $y$ are in $V(H_{i+1}')\backslash V(H_{i})$. Observe that to travel from $x_0$ to $y_0$, we may need to travel within $H_i$. 
Recall that $\hh_{i+1}$ is the graph obtained from $H_{i+1}:= H_i\cup H_{i+1}'$ by contracting $H_i$ into a single vertex $s_i$.
Observe that if without loss of generality $x_0\in V(H_i)$ and $y_0\in V(H_{i+1}')$, then 
$$d_{H_{i+1}}(x_0,y_0)\leq d_{H_i}(x_0, u_0) + d_{\hh_{i+1}}(s_i,y_0),$$
and 
$$d_{H_{i+1}}(y_0,x_0)\leq d_{\hh_{i+1}}(y_0, s_i)+ d_{H_i}(w_{q+1},x_0).$$
On the other hand, if both $x_0$ and $y_0$ are in $V(H_{i+1}')$, then 
$$d_{H_{i+1}}(x_0,y_0)\leq d_{\hh_{i+1}}(x_0,y_0) + (3+\epsilon)|S_i|.$$
Since $S_i \cap S_{i+1}' = \emptyset$, and  $\textrm{diam}(H_{i}) \leq (3+\epsilon)|S_{i}|$ (by the induction hypothesis), it is not hard to see that to prove Claim \ref{cl:new-to-new}, it suffices to show that for any two distinct vertices $x_0,y_0 \in V(\hh_{i+1})$,
\begin{equation}\label{eq:travel1}
    d_{\hh_{i+1}}(x_0,y_0)\leq 3|S_{i+1}'|+32 =  3\max\{|A_{i+1}|,|B_{i+1}|\}+32,
\end{equation}
which will then imply that $d_{\hh_{i+1}}(x_0,y_0)\leq (3+\epsilon)|S_{i+1}'|$.

To further simplify the case analysis, we will show that for any two distinct vertices $x_1,y_1 \in A_{i+1}\cup B_{i+1}\cup \{s_i\}$, 
\begin{equation}\label{eq:travel2}
d_{\hh_{i+1}}(x_1,y_1) \leq 3\max\{|A_{i+1}|,|B_{i+1}|\}+22.    
\end{equation}
Equation \eqref{eq:travel1} will follow from Equation \eqref{eq:travel2}, since for any distinct vertices $x_0, y_0 \in V(\hh_{i+1})$, there exist some vertices $x_1,y_1 \in A_{i+1} \cup B_{i+1}\cup \{s_i\}$ such that $d_{\hh_{i+1}}(x_0,x_1) \leq 5$ and $d_{\hh_{i+1}}(y_1,y_0)\leq 5$. Then we have that 
$$d_{\hh_{i+1}}(x_0,y_0) \leq d_{\hh_{i+1}}(x_0,x_1) + d_{\hh_{i+1}}(x_1,y_1) + d_{\hh_{i+1}}(y_1,y_0) \leq 3\max\{|A_{i+1}|,|B_{i+1}|\}+32.$$

In the remaining of the proof, we will show Equation \eqref{eq:travel2}. We will repeatedly use the fact that $|E(P)| = 3|A_{i+1}'|$ and $|E(Q)|\leq 3|B_{i+1}'|+2$ (see Figure \ref{fig:main_paths} for an illustration).

Let $x_1,y_1 \in A_{i+1}\cup B_{i+1}\cup \{s_i\}$ be arbitrary. Note that if $A_{i+1}'\backslash B_{i+1}'' =\emptyset$ or $B_{i+1}'\backslash A_{i+1}'' = \emptyset$, then $A_{i+1}'=B_{i+1}''$ or $B_{i+1}'= A_{i+1}''$. In the former case, we have that $B_{i+1} = B_{i+1}'\cup B_{i+1}'' = A_{i+1}'\cup B_{i+1}'$; in the later case, $A_{i+1} = A_{i+1}'\cup A_{i+1}'' = A_{i+1}'\cup B_{i+1}'$. In either case, we have that 
\begin{align*}
    d_{\hh_{i+1}}(x_1,y_1)  \leq |E(\ori{P})|+ |E(\ori{Q})|-1 & \leq 3|A_{i+1}'|+(3|B_{i+1}'|+2)-1\\                       
                       & \leq 3\max\{|A_{i+1}|,|B_{i+1}|\}+1.
\end{align*}
Therefore, we can assume that $A_{i+1}'\backslash B_{i+1}''\neq \emptyset$ and $B_{i+1}'\backslash A_{i+1}'' \neq \emptyset$.
Observe that if $x_1, y_1\neq s_i$, we could find $x,y\in (A_{i+1}'\backslash B_{i+1}'')\cup (B_{i+1}'\backslash A_{i+1}'')\cup \{s_i\}$, by traveling along $\ori{P}$ or $\ori{Q}$ (forwards or backwards) until reaching the first vertex in $(A_{i+1}'\backslash B_{i+1}'')\cup (B_{i+1}'\backslash A_{i+1}'')\cup \{s_i\}$, such that 
$$d_{\hh_{i+1}}(x_1,x)\leq 
\begin{cases}
    3|B_{i+1}''|+5 & \textrm{ if $x_1\in A_{i+1}'$},\\
    3|A_{i+1}''|+5 & \textrm{ if $x_1\in B_{i+1}'$};
\end{cases}$$ 
and similarly,
$$d_{\hh_{i+1}}(y,y_1)\leq 
\begin{cases}
    3|B_{i+1}''|+5 & \textrm{ if $x_1\in A_{i+1}'$},\\
    3|A_{i+1}''|+5 & \textrm{ if $x_1\in B_{i+1}'$}.
\end{cases}$$ 
In another word, there exists a path $P_{x_1 x}$ from $x_1$ to $x$ such that $V(\accentset{\circ}{P_{x_1x}})\cap S_{i+1}' \subseteq B_{i+1}''$ (if $x_1\in A_{i+1}'$) or $V(\accentset{\circ}{P_{x_1 x}})\cap S_{i+1}' \subseteq A_{i+1}''$ (if $x_1\in B_{i+1}'$).
Similarly, there exists a path $P_{y y_1}$ from $y$ to $y_1$ such that $V(\accentset{\circ}{P_{y y_1}})\cap S_{i+1}' \subseteq B_{i+1}''$ or $V(\accentset{\circ}{P_{y y_1}})\cap S_{i+1}' \subseteq A_{i+1}''$.
Note that $A_{i+1}''\subseteq B_{i+1}'\subseteq V(Q)$ and $B_{i+1}''\subseteq A_{i+1}'\subseteq V(P)$. Moreover, observe that
\begin{equation}\label{eq:travel3}
    d_{\hh_{i+1}}(x_1,y_1) \leq |E(P_{x_1 x}) \cup E(P_{x y})\cup E(P_{y y_1})|
\end{equation}

We will show Equation \eqref{eq:travel2} by case analysis on the locations of $x$ and $y$. Recall that $x,y \in (A'_{i+1}\backslash B_{i+1}'')\cup (B'_{i+1}\backslash A_{i+1}'')\cup \{s_i\}$. 
\begin{description}

\item {\bf Case 1}: $x\in (A'_{i+1}\backslash B_{i+1}'')\cup \{s_i\}$, $y\in (B_{i+1}'\backslash A_{i+1}'')\cup \{s_i\}$. 
By Claim \ref{cl:good-orientaiton} and the definition of $A_{i+1}'$ and $B_{i+1}'$, let $x'\in B_{i+1}'\cup \{s_i\}$ such that $d_{\hh_{i+1}}(x,x') \leq 5$, and let $y' \in A_{i+1}'\cup \{s_i\}$ such that $d_{\hh_{i+1}}(y',y)\leq 5$. If $\ind_P(y')\geq \ind_P(x)$, then
$$d_{\hh_{i+1}}(x,y)\leq |E(x\ori{P}y')|+d_{\hh_{i+1}}(y',y) \leq |E({P})|+5\leq  3|A_{i+1}'|+5.$$
On the other hand, if $\ind_Q(y)\geq \ind_Q(x')$, then
$$d_{\hh_{i+1}}(x,y) \leq d_{\hh_{i+1}}(x,x')+ |E(x'\ori{Q}y)| \leq 5+|E({Q})|\leq 5 + (3|B_{i+1}'|+2).$$
Since $A_{i+1}''\subseteq B_{i+1}'\subseteq V(Q)$ and $B_{i+1}''\subseteq A_{i+1}'\subseteq V(P)$, it is not hard to see that by Equation \eqref{eq:travel3} and the inequalities above, 
\begin{align*}
       d_{\hh_{i+1}}(x_1,y_1) & \leq |E(P_{x_1x}) \cup E(P_{x y})\cup E(P_{y y_1})|\\
      &\leq 3\max\{|A_{i+1}'|+|A_{i+1}''|,|B_{i+1}'|+|B_{i+1}''|\}+ 2+5\cdot 3 \\
    &\leq
    3\max\{|A_{i+1}|,|B_{i+1}|\}+ 17.
\end{align*}

Otherwise, we have that $\ind_P(y')< \ind_P(x)$ and $\ind_Q(y)< \ind_Q(x')$. See Figure \ref{fig:xypaths} for an illustration. Observe that in this case, we have $y_1\in V(Q)$ and $x_1\in V(P)$.
Let $P_{x x'}$ be the directed walk from $x$ to $x'$ with $|E(P_{x x'})|\leq 5$ and $P_{y' y}$ be the directed walk from $y'$ to $y$ with $|E(P_{y'y})|\leq 5$. 
Note that if $\ind_Q(y_1)\geq \ind_Q(x')$, then 
\begin{align*}
    d_{\hh_{i+1}}(x_1,y_1)&\leq |E(P_{x_1 x})|+ |E(P_{x x'})|+ d_{\ori{Q}}(x',y_1)\\
    &\leq (3|B_{i+1}''|+5)+ 5+ (3|B_{i+1}'|+2)\\
    &\leq 3|B_{i+1}|+12,
\end{align*}
and we are done. Similarly, if $\ind_P(x_1) \leq \ind_P(y')$, then 
\begin{align*}
    d_{\hh_{i+1}}(x_1,y_1)&\leq d_{\ori{P}}(x_1,y')+ |E(P_{y' y})|+ |E(P_{y y_1})|\\
    &\leq 3|A_{i+1}'|+ 5+ (3|A_{i+1}''|+5)\\
    &\leq 3|A_{i+1}|+10.
\end{align*}
Moreover, by our definitions of $x$ and $y$, $\ind_Q(y_1)\geq \ind_Q(y)$ and $\ind_P(x_1)\leq \ind_P(x)$. Hence we have that 
$$\ind_Q(y)\leq \ind_Q(y_1)< \ind_Q(x') \textrm{, and }
\ind_P(y')<\ind_P(x_1)\leq \ind_P(x).$$

\begin{figure}[htb]
	\begin{center}
        	\resizebox{10cm}{!}{\input{highway2.tikz}}
    \end{center} 
    \caption{Paths from $x_1$ to $y_1$.}
    \label{fig:xypaths}
\end{figure}

Let $P_1$ be $x\ori{P}u_p \ori{Q}y$, $P_2$ be $xP_{xx'}x' \ori{Q} s_i \ori{P}y'{P_{y'y}}y$.
Observe that $d_{\hh_{i+1}}(x,y)\leq \min\{|E(P_1)|, |E(P_2)|\}$, which implies that 
\begin{equation}\label{eq:travel5}
    d_{\hh_{i+1}}(x_1,y_1)\leq \min\{|E(P_1)|, |E(P_2)|\}+ |E(P_{y y_1})|+|E(P_{x_1 x})|.
\end{equation}

Observe that 
\begin{align*}
 |E(P_1)|+|E(P_2)| & \leq |E(P)|+|E(Q)|- (|E(P_{y y_1})|+|E(P_{x_1 x})|) +(|E(P_{xx'})|+|E(P_{y'y})|)  \\
 &\leq |E(P)|+|E(Q)|- (|E(P_{y y_1})|+|E(P_{x_1 x})|) +10.
\end{align*}
Thus, we have that 
\begin{align*}
    d_{\hh_{i+1}}(x_1,y_1) & \leq \min\{|E(P_1)|, |E(P_2)|\}+ |E(P_{y y_1})|+|E(P_{x_1 x})|\\
    &\leq \frac{1}{2}(|E(P_1)|+|E(P_2)|)+|E(P_{y y_1})|+|E(P_{x_1 x})|\\
    &\leq \frac{1}{2}\lp |E(P)|+|E(Q)|+|E(P_{y y_1})|+|E(P_{x_1 x})| \rp +5\\
    &\leq \frac{1}{2}\lp 3|A_{i+1}'|+3|B_{i+1}'|+2+ 3|A_{i+1}''|+5 + 3|B_{i+1}''|+5\rp+5\\
    &\leq 3\max\{|A_{i+1}|,|B_{i+1}|\}+ 11.
\end{align*}

This completes the proof of Case 1.

\item {\bf Case 2}: $x\in (B_{i+1}'\backslash A_{i+1}'')\cup \{s_i\}, y\in (A_{i+1}'\backslash B_{i+1}'')\cup \{s_i\}$. Case 2 is similar to Case 1. 

\item {\bf Case 3}: $x \in A_{i+1}'\backslash B_{i+1}''$, $y \in A_{i+1}'\backslash B_{i+1}''$. 
Thus $x,y\in V(P)$. If $\ind_P(x) \leq \ind_P(y)$, then 
$d_{\hh_{i+1}}(x,y)\leq 3|A_{i+1}'|$. Otherwise, by Claim \ref{cl:good-orientaiton}, there exists $x'\in B_{i+1}'\cup \{s_i\}$ such that $d_{\hh_{i+1}}(x,x')\leq 5$ and
$y' \in B_{i+1}'\cup \{s_i\}$ such that $d_{\hh_{i+1}}(y',y)\leq 5$. If $\ind_Q(x')\leq\ind_Q(y')$, then 
$$d_{\hh_{i+1}}(x,y)\leq d_{\hh_{i+1}}(x,x')+(3|B_{i+1}'|+2)+ d_{\hh_{i+1}}(y',y)\leq 3|B_{i+1}'|+12.$$
In both cases above, we have that 
$$d_{\hh_{i+1}}(x,y) \leq 3\max\{|A_{i+1}'|,|B_{i+1}'|\}+12.$$
Again, since $A_{i+1}''\subseteq B_{i+1}'\subseteq V(Q)$ and $B_{i+1}''\subseteq A_{i+1}'\subseteq V(P)$, we obtain that 
\begin{align*}
        d_{\hh_{i+1}}(x_1,y_1) & \leq |E(P_{x_1x}) \cup E(P_{x y})\cup E(P_{y y_1})|\\
      &\leq 3\max\{|A_{i+1}'|+|A_{i+1}''|,|B_{i+1}'|+|B_{i+1}''|\}+ 12+5\cdot 2 \\
    &\leq
    3\max\{|A_{i+1}|,|B_{i+1}|\}+ 22.
\end{align*}
Otherwise, we have $\ind_Q(y')<\ind_Q(x')$ and $\ind_P(x)> \ind_P(y)$. It follows by a similar argument in Case 1 that
$$d_{\hh_{i+1}'}(x_1,y_1)\leq 3\max\{|A_{i+1}|,|B_{i+1}|\}+22.$$

\item {\bf Case 4}: $x \in B_{i+1}'\backslash A_{i+1}''$, $y \in B_{i+1}'\backslash A_{i+1}''$. Case 4 is similar to Case 3. 
\end{description}
In all cases, we have that  
$d_{\hh_{i+1}}(x_1,y_1)\leq 3\max\{|A_{i+1}|,|B_{i+1}|\}+ 22$ for any two distinct vertices $x_1,y_1 \in A_{i+1}\cup B_{i+1}\cup \{s_i\}$. This completes the proof of Claim \ref{cl:new-to-new}, as discussed before.
\end{proof}

Now we are ready to complete the proof of Lemma \ref{lem:main-lemma} by justifying (b) of Lemma \ref{lem:main-lemma}.

\begin{claim}\label{cl:total_order}
For each $k\in [m]$, $\abs*{\bigcup_{v\in S_k} N[v]} \geq (\delta+1-3)|S_k| \geq (\delta-2)|S_k|$.
\end{claim}
\begin{proof}
Let $k\in [m]$. Recall that $S_k = \cup_{j=0}^k S_k'$.
By our construction, for any vertex $x \in S_{i}'$ and $y\in S_j'$ such that $i\neq j$, $N[x]\cap N[y] = \emptyset$. Hence it suffices to show that $\abs*{\bigcup_{v\in S_i'} N[v]} \geq (\delta-2)|S_i'|$ for all $i\in [m]$. Recall that $S_i'$ is the larger set of $A_{i}$ and $B_{i}$. We will consider the case when $S_i' = B_{i}$; the case $S_i'=A_{i}$ follows similar arguments (and is easier).   

Note that for every vertex $x\in B_i$, $|N[x]|\geq \delta+1$ since the minimum degree of $G$ is $\delta$. However, for two distinct vertices $x,y\in B_i$, $N[x]$ and $N[y]$ may overlap. We claim that if $N[x] \cap N[y] \neq \emptyset$, then $x$ and $y$ are almost non-overlapping, which implies that there exists a unique path of length at most two between $x$ and $y$. Recall that $B_i = B_i'\cup B_{i}''$. If $x,y\in B_i''\subseteq A_i'\backslash B_i'$, then $d_G(x,y)\geq 3$, which implies that $N[x]\cap N[y] = \emptyset$. If without loss of generality $x\in B_i'$, $y\in B_i''$ and $N[x]\cap N[y] \neq \emptyset$, then by the definition of $B_i''$, $y$ is almost non-overlapping with $x$. Hence we can assume that $x,y\in B_i'$. In this case, $x$ and $y$ are almost non-overlapping by the minimality of $P$ and $Q$, and the arguments are identical to the second part of the proof of Claim \ref{cl:short-cut}. Thus, we can conclude that if $N[x] \cap N[y] \neq \emptyset$, then there exists a unique path of length at most two between $x$ and $y$.

Now, construct an auxiliary multi-graph $\Gamma=(V(\Gamma), E(\Gamma))$ such that $V(\Gamma) = B_i$ and $xy\in E(\Gamma)$ if and only if $N[x]\cap N[y] \neq \emptyset$. Furthermore, an edge $xy\in E(\Gamma)$ has multiplicity $2$ if $xy\in E(G)$; $xy$ has multiplicity $1$ otherwise. It is not hard to see that 
\[\abs*{\bigcup_{v\in B_i} N[v]} \geq (\delta+1)|B_i| - |E(\Gamma)|.\]
Hence to show Claim \ref{cl:total_order}, it suffices to show that 
$\Delta(\Gamma)\leq 6$, which will then imply that $|E(\Gamma)|\leq 3|B_i|$.

Let $x\in V(\Gamma) = B_i = B_i'\cup B_i''$, where (recall that) $B_i' = $ 
$\{w_j: j\in [0, q-2] \textrm{ and $j\equiv 0$ (mod $3$)}\}$ and
$B_i'' = \{w \in A'_{i}\backslash B'_{i}: \textrm{ $w$ and $b$ are almost non-overlapping for all } b\in B'_{i}\}$. Suppose for contradiction that $d_{\Gamma}(x)\geq 7$.

\begin{figure}[htb]
	\begin{center}
        \begin{minipage}{.3\textwidth}
        		\resizebox{6cm}{!}{\begin{tikzpicture}[scale=1, Wvertex/.style={circle, draw=black, fill=white, scale=2}, bvertex/.style={circle, draw=black, fill=black, scale=0.4},rvertex/.style={circle, draw=red, fill=red, scale=0.2}, decoration={markings, mark= at position 0.8 with {\arrow[scale=2]{latex}}}]

\node [bvertex, label={[font=\normalsize] above:$b_1$}] (b1) at (4,2) {};

\node [bvertex, label={[font=\normalsize] above:$b_2$}] (b2) at (2,2) {};

\node [bvertex, label={[font=\normalsize] above:$b_3$}] (b3) at (0,2) {};

\node [bvertex, label={[font=\normalsize] above:$b_4$}] (b4) at (-2,2) {};

\node [bvertex, label={[font=\normalsize] above:$b_5$}] (b5) at (-4,2) {};

\node [bvertex, label={[font=\normalsize] below:$x\in B_i''\subseteq A_i'$}] (x) at (0,0) {};

\node [bvertex] (c1) at (2,1) {};
\node [bvertex] (c2) at (1,1) {};
\node [bvertex] (c3) at (0,1) {};
\node [bvertex] (c4) at (-1,1) {};
\node [bvertex] (c5) at (-2,1) {};

\node [bvertex] (d1a) at (2.667,2) {};
\node [bvertex] (d1b) at (3.334,2) {};
\node [bvertex] (d2a) at (0.667,2) {};
\node [bvertex] (d2b) at (1.334,2) {};
\node [bvertex] (d3a) at (-2.667,2) {};
\node [bvertex] (d3b) at (-3.334,2) {};
\node [bvertex] (d4a) at (-0.667,2) {};
\node [bvertex] (d4b) at (-1.334,2) {};

\draw [postaction={decorate}] (b5)--(c5);
\draw [postaction={decorate}] (c5)--(x);

\draw [postaction={decorate}] (b3)--(c3);
\draw [postaction={decorate}] (c3)--(x);

\draw (b1) -- (b2) -- (b3)--(b4)--(b5);
\draw (x) -- (b1);
\draw (x) -- (b2);
\draw (x) -- (b3);
\draw (x) -- (b4);
\draw (x) -- (b5);
\end{tikzpicture}	}
        \end{minipage}
            \hspace{2cm}
        \begin{minipage}{.3\textwidth}
        		\resizebox{5cm}{!}{\begin{tikzpicture}[scale=1, Wvertex/.style={circle, draw=black, fill=white, scale=2}, bvertex/.style={circle, draw=black, fill=black, scale=0.4},rvertex/.style={circle, draw=red, fill=red, scale=0.2}, decoration={markings, mark= at position 0.8 with {\arrow[scale=2]{latex}}}]

\node [bvertex, label={[font=\normalsize] above:$b_1$}] (b1) at (3,2) {};

\node [bvertex, label={[font=\normalsize] above:$b_2$}] (b2) at (1,2) {};

\node [bvertex, label={[font=\normalsize] above:$b_3$}] (b3) at (-1,2) {};

\node [bvertex, label={[font=\normalsize] above:$b_4$}] (b4) at (-3,2) {};

\node [bvertex, label={[font=\normalsize] below:$x\in B_i''\subseteq A_i'$}] (x) at (0,0) {};

\node [bvertex] (c) at (0.5,1) {};

\node [bvertex] (d1a) at (1.667,2) {};
\node [bvertex] (d1b) at (2.334,2) {};
\node [bvertex] (d2a) at (0.334,2) {};
\node [bvertex] (d2b) at (-0.334,2) {};
\node [bvertex] (d3a) at (-1.667,2) {};
\node [bvertex] (d3b) at (-2.334,2) {};

\draw [postaction={decorate}] (b4)--(x);
\draw [postaction={decorate}] (x)--(b1);

\draw (b1) -- (b2) -- (b3)--(b4);
\draw (x) -- (b1);
\draw (x) -- (b2);
\draw (x) -- (b3);
\draw (x) -- (b4);
\end{tikzpicture}	}
        \end{minipage}
    \end{center} 
    \caption{Cases when $x'\in B_i''$.}
    \label{fig:claim5a}
\end{figure}

Suppose first that $x\in B_i''$, and thus $x \in A_i'$. Observe that $x$ is not adjacent to any vertex $y\in B_i''$ since $d_G(x,y)\geq 3$ by the definition of $A_i'$. Thus since $d_{\Gamma}(x)\geq 7$, $x$ has at least four vertices $b_1, b_2, b_3, b_4\in B_i'$ adjacent to it in $\Gamma$ (and additionally $b_5$ if $x$ has at least five neighbors in $\Gamma$) listed in increasing order by their indices in $Q$. We first assume that $x$ has at least five neighbors in $\Gamma$ (which are all in $B_i'$). 
For $j\in [5]$, let $P_j$ be the path of length at most $2$ between $x$ and $b_j$.
Note that since $x\in V(\ori{P})$, then by the minimality of $P$, for each $j\in [5]$, if any edge in $E(\ori{P})\cap E(P_j)$ is oriented towards $x$, then the edge incident to $x$ in $P_j$ must be oriented towards $x$; similarly, if any edge in $E(P)\cap E(P_j)$ is oriented away from $x$, then the edge incident to $x$ in $P_j$ must be oriented away from $x$. 
Observe now that for $j<k \in [5]$ with $k-j\geq 2$, $d_G(b_j, b_k)\leq 4 < d_{Q}(b_j,b_k)$.
Thus it must follow that for each $j<k$ with $k-j\geq 2$, $b_j P_j x P_k b_k$ must have an edge that is in $\ori{P}$ which is oriented in the opposite direction of $b_j P_j x P_k b_k$. Additionally and similarly, for every $j\in [4]$ such that $|E(P_j)|=|E(P_{j+1})|=1$, $d_G(b_j, b_{j+1})\leq 2 < d_Q(b_j, b_{j+1})$. Thus it must follow that $b_j P_j x P_{j+1} b_{j+1}$ must have an edge that is in $\ori{P}$ which is oriented in the opposite direction of $b_j P_j x P_{j+1} b_{j+1}$ (see Figure \ref{fig:claim5a} for an illustration).
As a result, since $x$ has at least $5$ neighbors in $\Gamma$, by Pigeonhole Principle, $x$ must have two edges incident to it that are in $\ori{P}$ and are either both oriented towards it or both oriented away from it, leading to a contradiction.
Now we can assume that $x$ has exactly four vertices $b_1, b_2, b_3, b_4\in B_i'$ adjacent to it in $\Gamma$. 

Since $d_{\Gamma}(x)\geq 7$, at most one path in $P_1, P_2, P_3, P_4$ has length two. Observe that we obtain the same contradiction as above unless $P_1$, $P_4$ are paths of length $1$, $\ori{b_4x}, \ori{x b_1}\in E(\ori{P})$ and none of the edges in $P_2$ and $P_3$ are in $\ori{P}$. Now, let $Q':=Q-b_2Qb_3 + b_2 P_2 x P_3 b_3$. By the minimality of $Q$, $|V(Q')|\geq |V(Q)|$. Hence $d_{Q}(b_2, b_3)=3$.
Recall that $Q$ is picked (among all shortest paths from $u_p$ to $H_{i-1}$ that are consistent with $\ori{P}$) such that $|V(Q)\cap A_{i}'|$ is maximum. Moreover, observe that none of the internal vertices $z$ in $b_2\ori{Q}b_3$ is in $A_{i}'$, as otherwise, there exists a directed path of length at most $2$ from $z$ to $b_4$ (if $\ind_P(z)<\ind_P(x)$, or from $b_1$ to $z$ (if $\ind_P(z)>\ind_P(x)$, contradicting the minimality of $Q$.
Hence it follows that $|V(Q')\cap A_i'|>|V(Q)\cap A_i'|$, contradicting our choice of $Q$.

\begin{figure}[htb]
	\begin{center}
        \begin{minipage}{.3\textwidth}
        		\resizebox{4cm}{!}{\begin{tikzpicture}[scale=1, Wvertex/.style={circle, draw=black, fill=white, scale=2}, bvertex/.style={circle, draw=black, fill=black, scale=0.4},rvertex/.style={circle, draw=red, fill=red, scale=0.2}, decoration={markings, mark= at position 0.8 with {\arrow[scale=2]{latex}}}]

\node [bvertex, label={[font=\normalsize] above:$a_1$}] (a1) at (-2,0) {};

\node [bvertex, label={[font=\normalsize] above:$a_2$}] (a2) at (2,0) {};

\node [bvertex, label={[font=\normalsize] above:$x$}] (x) at (0,2) {};

\node [bvertex, label={[font=\normalsize] above:$b_1$}] (b1) at (2,2) {};

\node [bvertex, label={[font=\normalsize] above:$b_2$}] (b2) at (-2,2) {};

\node [bvertex] (c) at (1,1) {};

\draw [postaction={decorate}] (a1)--(x);
\draw [postaction={decorate}] (x)--(b1);

\draw (a2) -- (c) -- (x);
\draw (b2) -- (x);

\end{tikzpicture}	}
        \end{minipage}
            \hspace{2cm}
        \begin{minipage}{.3\textwidth}
        		\resizebox{5cm}{!}{\begin{tikzpicture}[scale=1, Wvertex/.style={circle, draw=black, fill=white, scale=2}, bvertex/.style={circle, draw=black, fill=black, scale=0.4},rvertex/.style={circle, draw=red, fill=red, scale=0.2}, decoration={markings, mark= at position 0.8 with {\arrow[scale=2]{latex}}}]

\node [bvertex, label={[font=\normalsize] above:$b_1$}] (b1) at (6,0) {};

\node [bvertex, label={[font=\normalsize] above:$b_2$}] (b2) at (3,0) {};

\node [bvertex, label={[font=\normalsize] above:$x$}] (x) at (0,0) {};

\node [bvertex, label={[font=\normalsize] above:$c_1$}] (c1) at (3,1.5) {};

\node [bvertex, label={[font=\normalsize] above:$c_2$}] (c2) at (1.5,1.5) {};

\node [bvertex] (d1a) at (4,0) {};
\node [bvertex] (d1b) at (5,0) {};
\node [bvertex] (d2a) at (1,0) {};
\node [bvertex] (d2b) at (2,0) {};

\draw (x) -- (b1);
\draw (x) -- (c2) -- (b2);
\draw (x) -- (c1) -- (b1);

\draw [postaction={decorate}] (c1)--(b1);
\draw [postaction={decorate}] (c2)--(b2);

\end{tikzpicture}	}
        \end{minipage}
    \end{center} 
    \caption{Cases when $x'\in B_i'\backslash A_i'$.}
    \label{fig:claim5b}
\end{figure}

Thus we can assume that $x\in B_i'\backslash A_i'$. We first note that there do not exist three neighbors $a_1, a_2, a_3$ of $x$ in $\Gamma$ such that $a_1, a_2, a_3 \in B_i''$; otherwise $a_1, a_2, a_3\in A_i'$ (assume they are listed in increasing order of their indices in $\ori{P}$), and we obtain that $d_G(a_1, a_3)\leq d_G(a_1, x) + d_G(x, a_3) \leq 2+2 = 4$, contradicting that $P$ is a shortest path between $a_1$ and $a_3$. 
Similarly, there do not exist two neighbors $a_1, a_2$ of $x$ in $\Gamma$ such that both $a_1 x$ and $a_2 x$ are edges in $G$. 

Note that since $d_\Gamma(x)\geq 7$, $x$ has at least two neighbors in $\Gamma$ that belong to $B_i'$. 
Suppose first that $x$ has exactly two neighbors $b_1,b_2\in B_i'$ in $\Gamma$ and $\ind_Q(b_1)< \ind_Q(b_2)$. Note that in this case, $d_{\Gamma}(x)\leq 7$ and thus $d_{\Gamma}(x)= 7$. It then follows that $xb_1, xb_2\in E(G)$, and there exist $a_1$, $a_2\in B_i''$ such that $xa_1 \in E(G)$ and there is a unique path $x c_2 a_2$ of length $2$ between $x$ and $a_2$. Observe that $d_G(b_1, b_2) =2 < d_{Q}(b_1, b_2)$, thus it follows that at least one of the edges $\ori{xb_1}$ and $\ori{b_2 x}$ must be in $E(\ori{P})$. Assume first that $\ori{xb_1}\in E(\ori{P})$, then $\ori{a_1x}\in E(\ori{P})$ by the minimality of $\ori{P}$. Thus, we have that $\ori{b_2 x}\notin E(\ori{P})$, contradicting that $d_{G-E(P)}(x,b_2)\geq 3$. The case when $\ori{b_2x} \in E(\ori{P})$ follows similar arguments.

Hence $x$ has at least three neighbors $b_1, b_2, b_3\in B_i'$ in $\Gamma$ listed in the increasing order of their indices in $Q$. Then since $x\in B_i'$, by Pigeonhole principle, at least two vertices in $\{b_1, b_2, b_3\}$ either both have greater indices in $\ori{Q}$ than $x$, or both have smaller indices in $\ori{Q}$ than $x$. Without loss of generality, assume that $b_1, b_2$ both have smaller indices in $\ori{Q}$ than $x$. Let $P_1, P_2$ be the paths of length at most $2$ from $x$ to $b_1, b_2$ respectively. 
Then since $d_G(x,b_j)\leq 2 < d_{G-E(P)}(x,b_j)$ for $j\in [2]$, at least one edge in each of $xP_1 b_1$ and $xP_2 b_2$ must be in $E(\ori{P})$ and oriented away from $x$ (to $b_1, b_2$ respectively). Call these two edges $e_1 \in E(P_1), e_2\in E(P_2)$ respectively. Clearly, these two edges can not both be incident to $x$. Moreover, $P_1$ and $P_2$ must both be paths of length $2$ by the minimality of $P$. Let $P_j = x c_j b_j$ for $j\in [2]$.
By the minimality of $P$, it must happen that $e_1 = \ori{c_1 b_1}$ and $e_2 = \ori{c_2 b_2}$. But now again by the minimality of $P$ and the fact that $d_G(c_1, c_2)\leq 2$, we have that 
either $\ori{b_2 c_1}\in E(\ori{P})$ (if $\ind_P(c_2) < \ind_P(c_1)$), or $\ori{b_1 c_2} \in E(\ori{P})$ $\ind_P(c_2) > \ind_P(c_1)$), contradicting that $x$ is almost non-overlapping with both $b_1$ and $b_2$.
\end{proof}

By Claims \ref{cl:new-to-new}, we have that 
$\ordiam(H_i)\leq 3|S_i|+32 \leq (3+\epsilon)|S_i|$ for every $i\in [m]$. 
Hence (a) of Lemma \ref{lem:main-lemma} holds.
Claim \ref{cl:total_order} implies that (b) of Lemma \ref{lem:main-lemma} holds.
Moreover, by the terminating condition of the algorithm, we have that $d(v,H_m) < L(\epsilon)$ for all $v\in V(G)$. Hence (c) holds. 
This completes the proof of the lemma. \qedhere
\end{proof}

\section{Proof of Theorem \ref{thm:closed-bound}.}

    For any fixed $\epsilon>0$, Lemma \ref{lem:main-lemma} gives a strongly connected orientation of a subgraph $H$ of $G$ such that every vertex in $V(G)\backslash V(H)$ is `close' (i.e., within $L(\epsilon)$ in graph distance) to $V(H)$ in $G$. Naturally we would like to extend the orientation of $H$ to an orientation of $G$ without increasing the diameter by too much. Such extension lemmas were initially given by Fomin et al. \cite{FMPR2004}, and by Bau and Dankelmann \cite{Bau-Dankelmann2015} when every vertex is at most distance $1$ or $2$ away from $H$ respectively. In \cite{Cochran2023+}, the first author extended these lemmas to graphs in which every vertex in $G$ is within a fixed constant distance $L$ from $V(H)$ in $G$.

\begin{lemma}\cite{Cochran2023+}\label{lem:extension}
	Let $G$ be a bridgeless graph, $H$ be a strongly connected orientation of a bridgeless subgraph of $G$ with $diam(H)=d$. Let $L$ be an integer such that $L\geq 2$ and for all $v\in V(G)$, $d_G(v,H)\leq L$. Then there exists a strongly connected orientation $\ori{G}$ of $G$ of diameter at most $d+4\binom{L+1}{2}$ that extends the orientation of $H$, i.e., $\ori{G}[V(H)] = H$.
\end{lemma}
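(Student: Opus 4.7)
My plan is to prove Lemma \ref{lem:extension} by iteratively extending $H$ through a carefully chosen open ear decomposition of $G$, orienting each ear as a directed path. Since $G$ is bridgeless and $H$, being a strongly connected orientation of a bridgeless subgraph, is 2-edge-connected as an undirected graph, Whitney's theorem on 2-edge-connected graphs gives an open ear decomposition $G = H \cup P_1 \cup \cdots \cup P_t$, in which each $P_j$ is a path whose two endpoints lie in $H_{j-1} := H \cup P_1 \cup \cdots \cup P_{j-1}$ and whose interior is disjoint from $V(H_{j-1})$.

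The central idea is to order the ears by BFS layers from $V(H)$. Let $V_k := \{v \in V(G) : d_G(v, V(H)) = k\}$ for $0 \leq k \leq L$. I will process ears so that $V_{k-1} \subseteq V(H_{j-1})$ before any ear with a layer-$k$ interior vertex is added. I claim that any ear introduced while the smallest unprocessed layer is $k$ can be chosen to have length at most $2k$. The argument contracts $V(H_{j-1})$ to a single vertex $h$ in $G$, producing a 2-edge-connected multigraph $G'$ in which $v \in V_k \setminus V(H_{j-1})$ satisfies $d_{G'}(v,h)=1$ (its BFS parent in $V_{k-1}$ is already absorbed into $h$). Bridgelessness of $G'$ then yields two edge-disjoint $v$--$h$ paths in $G'$, and an exchange argument using the hypothesis $d_G(\cdot,V(H)) \leq L$ together with the BFS structure forces the second path to have length at most $2k-1$, giving a cycle through $v$ of length at most $2k$ that lifts to an open ear of $G$ containing $v$.

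Having fixed the ears, orient each $P_j$ as a directed path from one endpoint $a_j$ to the other $b_j$; this preserves strong connectivity of $H_j := H_{j-1} \cup \ori{P_j}$ because $H_{j-1}$ is already strongly connected and any interior vertex of $P_j$ can both reach $b_j$ and be reached from $a_j$ along $\ori{P_j}$. The diameter analysis proceeds by induction on $k$: for every vertex $v$ with $d_G(v,V(H)) \leq k$,
\[
d_{\ori{G}}(v, V(H)) \leq k(k+1) = 2\binom{k+1}{2}, \qquad d_{\ori{G}}(V(H), v) \leq k(k+1).
\]
For the inductive step, $v$ lies on an ear of length at most $2k$ whose endpoints are at layer at most $k-1$ and hence within $(k-1)k$ of $V(H)$ both ways by the inductive hypothesis; going forward along $\ori{P_j}$ reaches an endpoint in at most $2k$ steps, giving $d_{\ori{G}}(v, V(H)) \leq 2k + (k-1)k = k(k+1)$, with the symmetric argument for $d_{\ori{G}}(V(H), v)$. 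Setting $k = L$ and using $\diam(H) = d$ via
\[
d_{\ori{G}}(x, y) \leq d_{\ori{G}}(x, V(H)) + \diam(H) + d_{\ori{G}}(V(H), y)
\]
yields $\diam(\ori{G}) \leq 2L(L+1) + d = d + 4\binom{L+1}{2}$.

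The main obstacle is the ear-length bound of $2k$ at layer $k$: while contracting $V(H_{j-1})$ and invoking 2-edge-connectivity immediately produces \emph{some} ear through $v$, ensuring its length is at most $2k$ requires a careful exchange argument showing the ``return'' path in $G'$ can be taken through layers of index at most $k$. The subtlety is that the greedy open ear decomposition might produce a long ear if one is not careful about which edge-disjoint paths are selected in $G'$; handling this correctly, and making sure the contracted cycle lifts to a genuine simple ear in $G$ (no parallel-edge pathology, interior truly outside $V(H_{j-1})$), is the technical heart of the proof.
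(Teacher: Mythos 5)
This lemma is not proved in the paper at all: it is imported from \cite{Cochran2023+}, so the only question is whether your argument stands on its own. It does not, because its central quantitative claim is false. Consider $H$ a strongly oriented cycle and let $G$ be obtained from $H$ by attaching a single ear $x\,v_1\,v_2\cdots v_{2L-1}\,y$ of length $2L$ with $x,y\in V(H)$. Every vertex of $G$ is within distance $L$ of $V(H)$ and $v_1$ lies in layer $k=1$, yet the only ear of $G$ containing $v_1$ has length $2L$, not $2k=2$; correspondingly, after contracting $V(H)$ to $h$, the unique $v_1$--$h$ path edge-disjoint from the edge $v_1x$ has length $2L-1$, not $2k-1=1$. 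So no ``exchange argument'' can force the return path through layers of index at most $k$ --- the return route may be compelled to climb all the way to layer $L$ and back --- and this is exactly the step you yourself flagged as the unproven ``technical heart.'' The damage propagates: your inductive estimate $d_{\ori{G}}(v,V(H))\leq k(k+1)$ for layer-$k$ vertices already fails at $k=1$ in this example, since however the ear is oriented one of $v_1,v_{2L-1}$ has one-way distance $2L-1>2$ to $V(H)$. The final bound $d+2L(L+1)$ happens to survive here, but the engine of your proof (short ears layer by layer plus the $k(k+1)$ induction) is broken, and the quadratic term $4\binom{L+1}{2}$ in the true statement is precisely a symptom of the cascading detours your linear-in-$k$ claim tries to rule out. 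Any repair must bound the portions of the escape routes lying outside the current oriented subgraph by a quantity depending on $L$ (not on the layer of the vertex alone) and then sum these contributions, which is in the spirit of the argument actually given in \cite{Cochran2023+}.

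A secondary, fixable point: an \emph{open} ear decomposition exists only for $2$-connected graphs; for a merely bridgeless ($2$-edge-connected) $G$ you must allow closed ears (cycles meeting the current subgraph in a single vertex), which you can orient as directed cycles without harming strong connectivity. That adjustment is routine; the ear-length/induction gap above is the substantive one.
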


Now we are ready to prove Theorem \ref{thm:closed-bound}.

\begin{proof}[Proof of Theorem \ref{thm:closed-bound}]
Let $\epsilon > 0$. Let $G$ be a bridgeless graph of order $n$ and minimum degree $\delta \geq 2$. By Lemma \ref{lem:main-lemma}, there exists some constant $L = L(\epsilon)$, a strongly connected orientation $H$ of $G$ and a vertex subset $S \subseteq V(H)$ such that 
$\diam(H) \leq (3+\epsilon)|S|$, $\abs*{\bigcup_{v\in S} N[v]} \geq (\delta-2)|S|$ and $d_G(v, H) \leq L$ for all $v\in V(G)$. By Lemma \ref{lem:extension}, there exists a strongly connected orientation of $G$ of diameter at most $(3+\epsilon)|S|+ 4\binom{L+1}{2}$ that extends the orientation of $H$.
Note that $(\delta-2)|S| \leq \abs*{\bigcup_{v\in S} N[v]}\leq n$. It follows that
\begin{align*}
    \ordiam(G) & \leq (3+\epsilon)|S|+ 4\binom{L+1}{2}\\
               & \leq (3+\epsilon) \frac{n}{\delta-2} + 4\binom{L+1}{2}.    
\end{align*}
This completes the proof of Theorem \ref{thm:closed-bound}.
\end{proof}

\end{document}